\documentclass[12pt]{article}

\usepackage[utf8]{inputenc}
\usepackage[T1]{fontenc}

\usepackage{amsmath}
\usepackage{amssymb}
\usepackage{amsthm}
\usepackage{mathtools}
\usepackage{mlmodern}

\usepackage{tikz}
\usetikzlibrary{arrows.meta,calc,decorations.pathmorphing}

\usepackage[pdfusetitle]{hyperref}  
\usepackage[all]{hypcap}  

\numberwithin{equation}{section}
\newtheorem{theorem}{Theorem}[section]
\newtheorem{lemma}[theorem]{Lemma}
\newtheorem{corollary}[theorem]{Corollary}
\theoremstyle{definition}

\DeclarePairedDelimiter{\abs}{|}{|}
\DeclarePairedDelimiter{\set}{\{}{\}}
\DeclarePairedDelimiter{\paren}{(}{)}
\DeclarePairedDelimiter{\gen}{\langle}{\rangle}

\newcommand{\CC}{\mathbb{C}}
\newcommand{\SC}{\mathfrak{S}}

\DeclareMathOperator{\Fun}{Fun}
\DeclareMathOperator{\id}{id}
\DeclareMathOperator{\pt}{pt}
\DeclareMathOperator{\csf}{CSF}

\newcommand{\arxiv}[1]{\href{https://arxiv.org/abs/#1}{arXiv:#1}}

\newcommand{\mailto}[1]{\href{mailto:#1}{\texttt{#1}}}


\tikzset{
backdrop/.style={
  black!10,
  line width=7pt,
  line join=round,
  line cap=round,
},
vertex/.style={
  circle,
  fill=black,
  inner sep=0pt,
  outer sep=auto,
  minimum size=3pt,
},
edge12/.style={
  draw,
  semithick,
},
edge23/.style={
  draw,
  double=black!10,
  semithick,
},
edge13/.style={
  draw,
  decorate,
  decoration={
    snake,
    amplitude=1pt,
    segment length=6pt,
  },
  semithick,
},
arrow12/.style={
  edge12,
  <->,
  shorten <=1pt,
  shorten >=1pt,
},
arrow23/.style={
  edge23,
  {<[length=3pt]}-{>[length=3pt]},
  shorten <=1pt,
  shorten >=1pt,
},
arrow13/.style={
  edge13,
  <->,
  shorten <=1pt,
  shorten >=1pt,
  decoration={
    pre length=3pt,
    post length=3pt,
  },
},
pastille/.style={
  circle,
  inner sep=1pt,
  fill=white,
  text height=1.5ex,
},
partial1/.style={
  draw,
  |->,
  edge node=node [auto] {$\partial_1$},
},
partial2/.style={
  draw,
  |->,
  edge node=node [auto] {$\partial_2$},
},
base1/.pic={
  \draw[backdrop]
  (0,-.5) node [vertex] (id) {} --
  (0,.5)  node [vertex] (s1) {};
  \path[every edge/.style={edge12}]
  (id) edge (s1);
},
base2/.pic={
  \filldraw[backdrop]
  (0,-.707) node [vertex] (id)   {} --
  (.707,0)  node [vertex] (s3)   {} --
  (0,.707)  node [vertex] (s1s3) {} --
  (-.707,0) node [vertex] (s1)   {} -- cycle;
  \path[every edge/.style={edge12}]
  (id) edge (s1)  (s3) edge (s1s3);
  \path[every edge/.style={edge23}]
  (id) edge (s3)  (s1) edge (s1s3);
},
base3/.pic={
  \filldraw[backdrop]
  (-90:1) node [vertex] (id)   {} --
  (-30:1) node [vertex] (s2)   {} --
   (30:1) node [vertex] (s2s1) {} --
   (90:1) node [vertex] (w0)   {} --
  (150:1) node [vertex] (s1s2) {} --
  (210:1) node [vertex] (s1)   {} -- cycle;
},
cond12/.pic={
  \path[every edge/.style={edge12}]
  (id) edge (s1)  (s2) edge (s2s1)  (s1s2) edge (w0);
},
cond23/.pic={
  \path[every edge/.style={edge23}]
  (id) edge (s2)  (s1) edge (s1s2)  (s2s1) edge (w0);
},
cond13/.pic={
  \path[every edge/.style={edge13}]
(id) edge (w0)  (s1) edge (s2s1)  (s2) edge (s1s2);
},
}


\begin{document}
\title{Divided difference operators for Hessenberg representations}
\author{Mathieu Guay-Paquet}
\date{July 2025}
\maketitle

\begin{abstract}
The equivariant cohomology ring of a regular semisimple Hessenberg variety in type A is a free module over the equivariant cohomology ring of a point.
When equipped with Tymoczko's dot action, it becomes a twisted representation of the symmetric group, and the character of this representation is given by the chromatic quasisymmetric function of an indifference graph.
In this note, we use divided difference operators to decompose this representation as a direct sum of sub-representations in a way that categorifies the modular relation between chromatic quasisymmetric functions.
\end{abstract}

\section{Introduction}

For context, we outline the same story about cohomology rings of flag varieties three times, with increasing refinements, after defining some notation.

\bigskip{\small\par\noindent\textbf{Acknowledgments.}
I would like to thank Nicolas England, Franco Saliola, and Julianna Tymoczko for reviewing earlier drafts of this paper;
Megumi Harada and the organizers of the workshops ``Interactions between Hessenberg varieties, chromatic functions, and LLT polynomials'' at BIRS in 2022 and ``New perspectives of coinvariant rings from viewpoints of geometry and algebraic combinatorics'' at RIMS in 2025 for inviting me to speak about it;
and the members of LACIM for providing a welcoming and fruitful working environment.}

\subsection{Notation}

Fix a natural number $n$.
Let $S_n$ be the symmetric group of order $n$, viewed as the set of functions from $\set{1, \ldots, n}$ to itself, so that
\begin{equation}
  (vw)(i) = v\paren[\big]{w(i)}
  \quad\text{for $v, w \in S_n$ and $i \in \set{1, \ldots, n}$.}
\end{equation}
The group $S_n$ is generated by the \emph{adjacent transpositions},
\begin{equation}
  s_i = (i \leftrightarrow i+1)
  \quad\text{for $i \in \set{1, \ldots, n-1}$},
\end{equation}
which satisfy $s_i^2 = \id$ and the \emph{braid relations}:
\begin{equation}\begin{gathered}
  s_i s_{(i+1)} s_i = s_{(i+1)} s_i s_{(i+1)}
  \qquad\text{for $i \in \set{1, \ldots, n-2}$}, \\
  s_i s_k = s_k s_i
  \qquad\text{for $\abs{i - k} > 1$}.
\end{gathered}\end{equation}
We sometimes write a permutation in one-line notation, as
\begin{equation}
  w = [w(1), \ldots, w(n)].
\end{equation}
In particular, the \emph{longest permutation} is the permutation
\begin{equation}
  w_0 = [n, \ldots, 1].
\end{equation}

A \emph{Hessenberg function} is a function $h : \set{1, \ldots, n} \to \set{1, \ldots, n}$ such that
\begin{equation}\begin{gathered}
  i \leq h(i) \qquad\text{for $i \in \set{1, \ldots, n}$}, \\
  h(i) \leq h(i+1) \qquad\text{for $i \in \set{1, \ldots, n-1}$}.
\end{gathered}\end{equation}
Like permutations, we sometimes write them in one-line notation, as
\begin{equation}
  h = [h(1), \ldots, h(n)].
\end{equation}

We write $\Lambda$ for the ring of symmetric polynomials in $n$ variables, and $\Lambda_0$ for the ideal of symmetric polynomials with no constant term:
\begin{equation}
  \Lambda_0 = \set{f \in \Lambda \mid f(0, \ldots, 0) = 0}.
\end{equation}

We are interested in two commuting $S_n$-actions on the polynomial ring
\begin{equation}
  R = \CC[t_1, \ldots, t_n; x_1, \ldots, x_n]
\end{equation}
by $\CC$-algebra morphisms: the \emph{dot} action, on the left, where $w \in S_n$ permutes the $t_i$ variables and fixes the $x_i$ variables:
\begin{equation}
  w \cdot t_i = t_{w(i)}, \qquad
  w \cdot x_i = x_i,
\end{equation}
and the \emph{star} action, on the right, where $w \in S_n$ fixes the $t_i$ variables and permutes the $w_i$ variables:
\begin{equation}
  t_i * w = t_i, \qquad
  x_i * w = x_{w^{-1}(i)}.
\end{equation}
These two actions are named following Tymoczko~\cite{tymoczko07}, who considered the corresponding actions on the ring of functions
\begin{equation}
  H = \Fun(S_n, \CC[t_1, \ldots, t_n]),
\end{equation}
defined for $w \in S_n$ and $f \in H$ by
\begin{equation}\begin{aligned}
  (w \cdot f)(v; t_1, \ldots, t_n) &= f(w^{-1}v; t_{w(1)}, \ldots, t_{w(n)}), \\
  (f * w)(v; t_1, \ldots, t_n) &= f(vw^{-1}; t_1, \ldots, t_n).
\end{aligned}\end{equation}
The correspondence is given by the following $\CC$-algebra morphism, which respects both actions:
\begin{equation}\begin{aligned}
  \varphi : R &\to H \\
  t_i &\mapsto (v \mapsto t_i) \\
  x_i &\mapsto (v \mapsto t_{v(i)}).
\end{aligned}\end{equation}
By abuse of notation, we will write $t_i$ and $x_i$ for $\varphi(t_i)$ and $\varphi(x_i)$ in $H$.

For each adjacent transposition $s_i$, there is a corresponding \emph{divided difference operator} $\partial_i$ defined by
\begin{equation}
  \partial_i(f) = \frac{f * (1 - s_i)}{x_i - x_{(i+1)}}.
\end{equation}
This definition makes sense in the ring $R$, where the quotient always exists as a (unique) polynomial.
However, it only makes partial sense in the ring $H$, where the quotient may not exist (but is unique if it does exist).
One of our contributions in this paper is to detail some suitable domains and codomains for $\partial_i$ on $H$ (see \autoref{sec:domain-codomain}).
For now, note that, on the appropriate domains, the divided difference operators satisfy $\partial_i^2 = 0$ and the braid relations:
\begin{equation}\begin{gathered}
  \partial_i \partial_{(i+1)} \partial_i = \partial_{(i+1)} \partial_i \partial_{(i+1)}
  \qquad\text{for $i \in \set{1, \ldots, n-2}$}, \\
  \partial_i \partial_k = \partial_k \partial_i
  \qquad\text{for $\abs{i - k} > 1$},
\end{gathered}\end{equation}
since
\begin{multline}
  \partial_i \partial_{(i+1)} \partial_i(f) = \partial_{(i+1)} \partial_i \partial_{(i+1)}(f) \\
  = \frac{f * (1 - s_i - s_{(i+1)} + s_i s_{(i+1)} + s_{(i+1)} s_i - s_i s_{(i+1)} s_i)}{(x_i - x_{(i+1)}) (x_i - x_{(i+2)}) (x_{(i+1)} - x_{(i+2)})}
\end{multline}
and
\begin{equation}
\partial_i \partial_k(f) = \partial_k \partial_i(f)
= \frac{f * (1 - s_i - s_k + s_i s_k)}{(x_i - x_{(i+1)}) (x_k - x_{(k+1)})}.
\end{equation}
The divided difference operators are not ring homomorphisms, but they are $\CC$-linear and satisfy a (skew) product rule:
\begin{equation}
  \partial_i(fg) = \partial_i(f) g + (f * s_i) \partial_i(g).
\end{equation}

\subsection{First pass: ordinary cohomology}

Consider the variety of complete flags in $\CC^n$,
\begin{equation}
  X = \set[\big]{V_\bullet = (V_0 \subseteq \cdots \subseteq V_n) \bigm| \text{$\dim(V_i) = i$ for $i \in \set{0, \ldots, n}$}}.
\end{equation}
The Borel presentation for its ordinary cohomology ring is the polynomial ring quotient
\begin{equation}
  H^*(X) = \frac{\CC[x_1, \ldots, x_n]}{\gen{f(x_1, \ldots, x_n) \mid f \in \Lambda_0}},
\end{equation}
where $x_i$ is the first Chern class of the line bundle $V_i / V_{(i-1)}$ over $X$ (see, for example, \cite{kaji15} and references therein).

This ring has a very nice $\CC$-linear basis, given by the Schubert polynomials (for their history, see~\cite{lascoux95}).
These polynomials are indexed by the permutations in $S_n$, and they can be defined recursively by starting at the longest permutation, and applying divided difference operators to obtain the others:
\begin{equation}\begin{gathered}
  \SC_{w_0} = x_1^{n-1} x_2^{n-2} \cdots x_n^0, \\
  \SC_{w s_i} = \partial_i(\SC_w) \qquad\text{if $w(i) > w(i+1)$}.
\end{gathered}\end{equation}
Note that the divided difference operators are well-defined on $H^*(X)$ because they respect the ideal $\gen{f(x_1, \ldots, x_n) \mid f \in \Lambda_0}$ by the product rule; and that the indexing by permutations is well-defined because of the braid relations.

\subsection{Second pass: equivariant cohomology}

Consider the $n$-dimensional complex torus $T = (\CC^\times)^n$, whose classifying space is $BT = (\CC P^\infty)^n$.
The equivariant cohomology ring of a single point, equipped with the trivial $T$-action, is the polynomial ring
\begin{equation}
  H_T^*(\pt) = H^*(BT) = \CC[t_1, \ldots, t_n],
\end{equation}
where $t_i$ it the first Chern class of the tautological line bundle over the $i$th factor $\CC P^\infty$ of $BT$ (see, for example, \cite{tymoczko05} and references therein).

The flag variety $X$ can be equipped with a $T$-action by identifying $T$ with the group of invertible diagonal matrices on $\CC^n$.
The Borel presentation for its equivariant cohomology ring is the polynomial ring quotient
\begin{equation}\label{eq:equivariant-borel}
  H_T^*(X) = \frac{\CC[t_1, \ldots, t_n; x_1, \ldots, x_n]}{\gen{f(t_1, \ldots, t_n) - f(x_1, \ldots, x_x) \mid f \in \Lambda}}.
\end{equation}

We can recover the ordinary cohomology by setting $t_1 = \cdots = t_n = 0$.
The divided difference operators respect the quotienting ideal in~(\ref*{eq:equivariant-borel}), and we can lift the $\CC$-linear basis of Schubert polynomials for $H^*(X)$ to a $H_T^*(\pt)$-module basis of \emph{double} Schubert polynomials for $H_T^*(X)$, as follows:
\begin{equation}\begin{gathered}
  \SC'_{w_0} = \prod_{i+k \leq n} (x_i - t_k), \\
  \SC'_{w s_i} = \partial_i(\SC'_w) \qquad\text{if $w(i) > w(i+1)$}.
\end{gathered}\end{equation}
This basis shows that $H_T^*(X)$ is a free module over $\CC[t_1, \ldots, t_n]$, but in fact the same basis shows that $H_T^*(X)$ is also a free module over $\CC[x_1, \ldots, x_n]$.

As noted in~\cite{tymoczko07}, the dot action acts trivially on the subring $\CC[x_1, \ldots, x_n]$, so we can consider $H_T^*(X)$ as a representation of $S_n$ over $\CC[x_1, \ldots, x_n]$; this representation is a single copy of the regular representation of $S_n$.

The dot action acts non-trivially on $\CC[t_1, \ldots, t_n]$, but it does send this subring to itself, so we can consider $H_T^*(X)$ as a \emph{twisted} representation of $S_n$ over $\CC[t_1, \ldots, t_n]$; this representation consists of $n!$ copies of the (twisted) trivial representation of $S_n$.

\subsection{Third pass: Hessenberg varieties}

Now we consider some sub-varieties of the flag variety $X$.
Fix a diagonal matrix $M$ with distinct eigenvalues.
The specific choice does not matter for this story, only that:
\begin{itemize}
  \item
    the Jordan blocks of $M$ all have size one (so that $M$ is semisimple),
  \item
    the Jordan blocks have distinct eigenvalues (so that $M$ is regular),
  \item
    $M$ commutes with the torus $T$.
\end{itemize}
Then, for every Hessenberg function $h$, we have the corresponding (regular semisimple) Hessenberg variety
\begin{equation}
  X(h) = \set[\big]{V_\bullet \in X \bigm| \text{$MV_i \subseteq V_{h(i)}$ for $i \in \set{1, \ldots, n}$}}.
\end{equation}
As two extreme examples, note that $X([n, \ldots, n])$ is the entire flag variety, and that $X([1, \ldots, n])$ consists of just $n!$ isolated points, the permutation flags, which are the fixed points of the $T$-action on $X$.

For each Hessenberg function, the sub-variety $X(h) \subseteq X$ is preserved by the $T$-action, so we can consider its equivariant cohomology ring.
Instead of the Borel presentation as a quotient of the ring $R$, we have the GKM presentation~\cite{tymoczko07}, which exhibits $H_T^*(X(h))$ as a subring of $H$ (see \autoref{sec:divisibility-conditions}).
However, note that the kernel of the map $\varphi : R \to H$ is precisely the quotienting ideal in~\eqref{eq:equivariant-borel}, so the two presentations are closely related.

The concept which generalizes the basis of double Schubert polynomials is the concept of a \emph{flow-up} basis~\cite{harada-tymoczko17}. These are bases
\begin{equation}
  \set{f_w \mid w \in S_n} \subseteq H_T^*(X(h)),
\end{equation}
over $H_T^*(\pt)$, indexed by permutations, where
\begin{equation}
  f_w(v) = 0 \quad\text{if $w \not\leq v$ in Bruhat order},
\end{equation}
and where $f_w(w)$ is a prescribed product of factors of the form $(t_i - t_k)$.
In the case of the entire flag variety, the double Schubert polynomials are the unique flow-up basis, but in general the flow-up bases are not unique.
Still, flow-up bases do exist, and they show that $H_T^*(X(h))$ is a free module over $\CC[t_1, \ldots, t_n]$, and a free module over $\CC[x_1, \ldots, x_n]$.

The dot action on $H$ preserves the subring $H_T^*(X(h))$.
When we consider it as a representation of $S_n$ over $\CC[x_1, \ldots, x_n]$, its graded Frobenius characteristic is a unicellular LLT polynomial.
When we consider it as a twisted representation of $S_n$ over $\CC[t_1, \ldots, t_n]$, its graded Frobenius characteristic is a chromatic quasisymmetric function (up to the fundamental involution on symmetric functions)~\cite{brosnan-chow18,carlsson-mellit18,guay-paquet16,lascoux-leclerc-thibon97,shareshian-wachs12,shareshian-wachs16,stanley95}.

Both unicellular LLT polynomials and chromatic quasisymmetric functions are known to satisfy the modular relation (a variant on the deletion-contraction recurrence for chromatic polynomials of graphs), which states that, for certain triples $(h_{-}, h, h_{+})$ of Hessenberg functions:
\begin{equation}\label{eq:modular-csf}
  (1+q) \csf_q(h) = \csf_q(h_{+}) + q \csf_q(h_{-})
\end{equation}
In fact, together with a relatively small set of base cases, the modular relation uniquely determines these two families of symmetric functions~\cite{abreu-nigro21,guay-paquet13,orellana-scott14}.

Another contribution of this paper is to give a decomposition of the dot representation on $H_T^*(X(h))$ which corresponds to the modular relation, by using divided difference operators (see \autoref{thm:stable} and \autoref{thm:almost-stable}).

Note that Horiguchi, Masuda and Sato~\cite{horiguchi-masuda-sato24} have also given such a decomposition, which is nicely motivated by the geometry of Hessenberg varieties.
They exhibit the blow-up of $X(h_{+})$ along $X(h_{-})$ as a $\CC P^1$-bundle over $X(h)$ equipped with a $T$-action.
Based on this, they identify a certain subring of
\begin{equation}
  H_T^*(X(h)) \oplus \paren[\big]{H_T^*(X(h)) * s_i}
\end{equation}
as being isomorphic to both:
\begin{itemize}
  \item
    the direct sum of a plain copy of $H_T^*(X(h))$ and a degree-shifted copy of $H_T^*(X(h))$, corresponding to the left-hand side of~\eqref{eq:modular-csf}, and
  \item
    the direct sum of a plain copy of $H_T^*(X(h_{+}))$ and a degree-shifted copy of $H_T^*(X(h_{-}))$, corresponding to the right-hand side.
\end{itemize}

In contrast, our decomposition is only algebraic, and corresponds to a version of~\eqref{eq:modular-csf} which is divided by $(1+q)$.
Using the divided difference operators, we identify a sub-representation
\begin{equation}
  H_T^*(X(h_{+}))^{*s_i} \subseteq H_T^*(X(h_{+}))
\end{equation}
such that $H_T^*(X(h_{+}))$ is the direct sum of a plain copy and a degree-shifted copy of $H_T^*(X(h_{+}))^{*s_i}$; similarly for $H_T^*(X(h_{-}))$.
Then, we show that a single, plain copy of $H_T^*(X(h))$ is the direct sum of a plain copy of $H_T^*(X(h_{+}))^{*s_i}$ and a degree-shifted copy of $H_T^*(X(h_{-}))^{*s_i}$.

\section{Divisibility conditions}\label{sec:divisibility-conditions}

We now describe a family of subrings of the ring of functions $H$, which will be a convenient setting for the divided difference operators, and among which are the equivariant cohomology rings of Hessenberg varieties.

For an element $f \in H$ and a transposition $\tau = (i \leftrightarrow k) \in S_n$, we say that \emph{$f$ satisfies condition $\tau$} if
\begin{equation}
  f * (1 - \tau) \quad\text{is a multiple of}\quad (x_i - x_k).
\end{equation}
Note that, in particular, any multiple of $(x_i - x_k)$ satisfies condition $\tau$:
\begin{equation}
  \paren[\big]{(x_i - x_k) f} * (1 - \tau) = (x_i - x_k) \paren[\big]{f * (1 + \tau)}.
\end{equation}
The set of elements which satisfy condition $\tau$ is closed under addition and multiplication, so they form a subring of $H$, which we call $H_\tau$.
More generally, for a set of transpositions $C \subseteq S_n$, we write
\begin{equation}
  H_C = \bigcap_{\tau \in C} H_\tau
\end{equation}
for the subring of elements which satisfy all conditions in $C$.
If $C$ is the empty set, then $H_C$ is the entire ring $H$.

Note that the elements $t_1, \ldots, t_n$ and $x_1, \ldots, x_n$ satisfy all divisibility conditions, so the entire image of $\varphi : R \to H$ does too.

If $w \in S_n$ and $f$ satisfies condition $\tau$, then
\begin{equation}\begin{gathered}
  \text{$w \cdot f$ satisfies condition $\tau$, and} \\
  \text{$f * w$ satisfies condition $w^{-1}\tau w$,}
\end{gathered}\end{equation}
so each $H_C$ is closed under the dot action, but usually \emph{not} the star action.

For a Hessenberg function $h$, the GKM presentation of the equivariant cohomology ring for the corresponding Hessenberg variety is
\begin{equation}
  H_T^*(X(h)) = H_{C(h)},
\end{equation}
with the following set of divisibility conditions:
\begin{equation}\label{eq:hessenberg-conditions}
  C(h) = \set{(i \leftrightarrow k) \mid i < k \leq h(i)}.
\end{equation}

Recall that the definition of the $i$th divided difference operator is
\begin{equation}
  \partial_i(f) = \frac{f * (1 - s_i)}{x_i - x_{(i+1)}},
\end{equation}
if the quotient exists in $H$.
Since this is the defining condition of the subring $H_{s_i}$, we can write the largest domain of definition of $\partial_i$ as:
\begin{equation}
  \partial_i : H_{s_i} \to H.
\end{equation}
For Hessenberg functions, note that $s_i \in C(h)$ whenever $h(i) \neq i$.
We give a more careful account of some domains and codomains for $\partial_i$ in \autoref{sec:domain-codomain}.

We record here some of the convenient computational properties of divided differences which follow directly from the definition.
\begin{lemma}\label{lem:compute}
  For $f, g \in H_{s_i}$ and $w \in S_n$, we have:
  \begin{enumerate}
    \item $\partial_i(f + g) = \partial_i(f) + \partial_i(g)$,
    \item $\partial_i(f g) = \partial_i(f) \, g + (f * s_i) \, \partial_i(g)$,
    \item $\partial_i(f g) = f \, \partial_i(g)$ when $\partial_i(f) = 0$,
    \item $\partial_i(f) = 0$ exactly when $f * s_i = f$,
    \item $\partial_i(w \cdot f) = w \cdot \partial_i(f)$,
    \item $\partial_i(f * s_i) = -\partial_i(f)$,
    \item $\partial_i(f) * s_i = \partial_i(f)$,
    \item $\partial_i(f) * w = \partial_k(f * w)$ when $w(k) = i$ and $w(k+1) = i+1$,
    \item $\partial_i(t_k) = 0$,
    \item $\partial_i$ is $\CC[t_1, \ldots, t_n]$-linear,
    \item $\partial_i(x_i) = 1$,
    \item $\partial_i(x_{(i+1)}) = -1$,
    \item $\partial_i(x_k) = 0$ when $k \notin \set{i, i+1}$.
  \end{enumerate}
\end{lemma}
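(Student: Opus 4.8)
The plan is to derive every item directly from the definition $\partial_i(f) = \bigl(f * (1-s_i)\bigr)/(x_i - x_{(i+1)})$, using only a few basic facts: that the star action is a right action of $S_n$ by $\CC$-algebra morphisms, that $s_i^2 = \id$, that the dot and star actions commute, that the star action on generators is given by $x_j * w = x_{w^{-1}(j)}$ and $t_j * w = t_j$, and that $x_i - x_{(i+1)}$ is not a zero divisor in $H$ (its value at any $v$ is the nonzero polynomial $t_{v(i)} - t_{v(i+1)}$). The last fact makes the quotient unique and makes division by $x_i - x_{(i+1)}$ a $\CC$-linear operation wherever it is defined; and since $H_{s_i}$ is a subring containing all the $t_j$ and $x_j$, every quotient written below genuinely exists in $H$, so no separate well-definedness check is needed.

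First I would dispatch the linear and kernel statements. Item (1) is immediate from the linearity of the star action and of division. For (4), the quotient vanishes exactly when its numerator $f*(1-s_i)$ does, because $x_i - x_{(i+1)}$ is a non-zero-divisor; and $f*(1-s_i) = 0$ is precisely $f*s_i = f$. Items (9) and (13) then follow from (4), since $t_k * s_i = t_k$ and, for $k \notin \set{i,i+1}$, one has $x_k * s_i = x_{s_i^{-1}(k)} = x_k$. Items (11) and (12) are one-line computations using $x_i * s_i = x_{(i+1)}$ and $x_{(i+1)} * s_i = x_i$. The product rule (2) is exactly the skew product rule already recorded in the introduction (obtained by adding and subtracting $(f*s_i)g$ in the numerator); item (3) is the special case of (2) with $\partial_i(f) = 0$, using (4) to replace $f*s_i$ by $f$; and item (10) combines (1), (3), and (9).

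Next come the equivariance statements (5)--(8), which turn on how the numerator and denominator transform. For (5), the dot action commutes with the star action and fixes each $x_j$, so it passes through both the numerator and the denominator. For (6), $s_i^2 = \id$ gives $(f*s_i)*(1-s_i) = f*(s_i - 1) = -f*(1-s_i)$. For (7), applying $*s_i$ to the quotient and using multiplicativity of the star action, the numerator picks up a sign as in (6) while the denominator becomes $(x_i - x_{(i+1)})*s_i = x_{(i+1)} - x_i$, so the two sign changes cancel and the quotient is fixed.

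The main work is item (8), the covariance of $\partial_i$ under an arbitrary $*w$, and I expect it to be the only step requiring real care, since it is the only one in which the index of the operator changes and both numerator and denominator must be tracked through a conjugation. The key algebraic input is the identity $w s_k = s_i w$, which one verifies on one-line notation from $w(k) = i$ and $w(k+1) = i+1$, giving $w^{-1} s_i w = s_k$. This does double duty: it guarantees (via the transformation rule $f*w \in H_{w^{-1} s_i w}$ already noted) that $f*w$ satisfies condition $s_k$, so that $\partial_k(f*w)$ is defined; and it lets us rewrite the numerator, since $(1-s_i)w = w(1-s_k)$ yields $\bigl(f*(1-s_i)\bigr)*w = (f*w)*(1-s_k)$. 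Meanwhile the denominator transforms as $(x_i - x_{(i+1)})*w = x_{w^{-1}(i)} - x_{w^{-1}(i+1)} = x_k - x_{(k+1)}$. Dividing the transformed numerator by the transformed denominator gives exactly $\partial_k(f*w)$, which is the claim.
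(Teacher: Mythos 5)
Your proposal is correct and matches the paper's proof in approach: the paper disposes of all thirteen items with the single remark ``by direct computation, keeping in mind that if the quotient exists, then $(f/g) * w = (f*w)/(g*w)$,'' which is precisely the numerator-and-denominator bookkeeping you carry out, including the key conjugation $w^{-1} s_i w = s_k$ for item (8). Your write-up simply makes explicit the computations the paper leaves to the reader, with the non-zero-divisor observation correctly justifying uniqueness of quotients and the kernel characterization in item (4).
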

\begin{proof}
  By direct computation, keeping in mind that if the quotient exists, then $(f/g) * w = (f*w)/(g*w)$.
\end{proof}

\section{Examples for \texorpdfstring{$n = 3$}{n = 3}}

For concreteness, here is a convenient way of writing down elements of $H$, at least for $n = 3$.
We will draw $S_3$ as a collection of 6 vertices, arranged as a hexagon, where each vertex represents a permutation, labelled as follows:
\begin{equation}\begin{tikzpicture}[baseline=0]
  \pic{base3};
  \node[below=2pt] at (id)   {$[123]$};
  \node[left=2pt]  at (s1)   {$s_1 = [213]$};
  \node[right=2pt] at (s2)   {$[132] = s_2$};
  \node[left=2pt]  at (s1s2) {$s_1 s_2 = [231]$};
  \node[right=2pt] at (s2s1) {$[312] = s_2 s_1$};
  \node[above=2pt] at (w0)   {$[321]$};
  \useasboundingbox (-4,0) (4,0);
\end{tikzpicture}\end{equation}
Then, an element of $H$ can be seen as an $S_n$-indexed tuple of multivariate polynomials, and we can write each polynomial next to the corresponding vertex in the hexagon. For example, the element of $H$ given by the function
\begin{equation}\begin{aligned}
  [123] &\mapsto 0 &\quad
  [213] &\mapsto 5t_1 &\quad
  [132] &\mapsto 1 \\
  [231] &\mapsto t_2 t_3 &
  [312] &\mapsto (t_1 + t_3) &
  [321] &\mapsto t_1^2 (t_2 - t_3)
\end{aligned}\end{equation}
can be drawn as
\begin{equation}\label{eq:arbitrary-element}\begin{tikzpicture}[baseline=0]
  \pic{base3};
  \node[below=2pt] at (id)   {$0$};
  \node[left=2pt]  at (s1)   {$5t_1$};
  \node[right=3pt] at (s2)   {$1$};
  \node[left=2pt]  at (s1s2) {$t_2 t_3$};
  \node[right=2pt] at (s2s1) {$(t_1 + t_3)$};
  \node[above=2pt] at (w0)   {$t_1^2 (t_2 - t_3)$};
  \useasboundingbox (-4,0) (4,0);
\end{tikzpicture}\end{equation}

\pagebreak

The star action of $S_3$ on $H$ on the right permutes the polynomials attached to the vertices.
For the three transpositions $s_1, s_2, w_0 \in S_3$, the effect is:
\begin{equation}\begin{tikzpicture}[baseline=0]
  \begin{scope}[shift={(-3,0)}]
    \pic{base3};
    \path[every edge/.style={arrow12}]
    (id) edge (s1)  (s2) edge (s2s1)  (s1s2) edge (w0);
    \node[pastille] at (0,0) {$*s_1$};
  \end{scope}
  \begin{scope}[shift={(0,0)}]
    \pic{base3};
    \path[every edge/.style={arrow23}]
    (id) edge (s2)  (s1) edge (s1s2)  (s2s1) edge (w0);
    \node[pastille] at (0,0) {$*s_2$};
  \end{scope}
  \begin{scope}[shift={(3,0)}]
    \pic{base3};
    \path[every edge/.style={arrow13}]
    (id) edge (w0)  (s1) edge (s2s1)  (s2) edge (s1s2);
    \node[pastille] at (0,0) {$*w_0$};
  \end{scope}
  \useasboundingbox (0,-1.3) (0,1.3);
\end{tikzpicture}\end{equation}
The three actions above are drawn with different kinds of arrows; each kind corresponds to a divisibility condition which could be imposed on the elements of $H$.
For the element $f \in H$ of~\eqref{eq:arbitrary-element}, the element $(f * s_1)$ is:
\begin{equation}\begin{tikzpicture}[baseline=0]
  \pic{base3};
  \node[below=2pt] at (id)   {$5t_1$};
  \node[left=3pt]  at (s1)   {$0$};
  \node[right=2pt] at (s2)   {$(t_1 + t_3)$};
  \node[left=2pt]  at (s1s2) {$t_1^2 (t_2 - t_3)$};
  \node[right=3pt] at (s2s1) {$1$};
  \node[above=2pt] at (w0)   {$t_2 t_3$};
  \useasboundingbox (-4,0) (4,0);
\end{tikzpicture}\end{equation}

The dot action of $S_3$ on $H$ on the left permutes the variables $t_1, t_2, t_3$ in addition to permuting the polynomials attached to the vertices. For the three transpositions $s_1, s_2, w_0 \in S_3$, the effect is:
\begin{equation}\begin{tikzpicture}[baseline=0]
  \begin{scope}[shift={(-3,0)}]
    \pic{base3};
    \path
    (id)   edge[arrow12] (s1)
    (s2)   edge[arrow13] (s1s2)
    (s2s1) edge[arrow23] (w0);
    \node[pastille] at (0,0) {$s_1 \,\cdot$};
    \node[below=5pt] at (id) {$t_1 \leftrightarrow t_2$};
  \end{scope}
  \begin{scope}[shift={(0,0)}]
    \pic{base3};
    \path
    (id)   edge[arrow23] (s2)
    (s1)   edge[arrow13] (s2s1)
    (s1s2) edge[arrow12] (w0);
    \node[pastille] at (0,0) {$s_2 \,\cdot$};
    \node[below=5pt] at (id) {$t_2 \leftrightarrow t_3$};
  \end{scope}
  \begin{scope}[shift={(3,0)}]
    \pic{base3};
    \path
    (id) edge[arrow13] (w0)
    (s1) edge[arrow23] (s1s2)
    (s2) edge[arrow12] (s2s1);
    \node[pastille] at (0,0) {$w_0 \,\cdot$};
    \node[below=5pt] at (id) {$t_1 \leftrightarrow t_3$};
  \end{scope}
  \useasboundingbox (0,1.3);
\end{tikzpicture}\end{equation}
For the element $f \in H$ of~\eqref{eq:arbitrary-element}, the element $(s_1 \cdot f)$ is:
\begin{equation}\begin{tikzpicture}[baseline=0]
  \pic{base3};
  \node[below=2pt] at (id)   {$5t_2$};
  \node[left=3pt]  at (s1)   {$0$};
  \node[right=2pt] at (s2)   {$t_1 t_3$};
  \node[left=3pt]  at (s1s2) {$1$};
  \node[right=2pt] at (s2s1) {$t_2^2 (t_1 - t_3)$};
  \node[above=2pt] at (w0)   {$(t_2 + t_3)$};
  \useasboundingbox (-4,0) (4,0);
\end{tikzpicture}\end{equation}

The elements $t_1, t_2, t_3, x_1, x_2, x_3 \in H$, which satisfy all three divisibility conditions, are:
\begin{equation}\begin{tikzpicture}[baseline=0]
  \begin{scope}[shift={(0,1.8)}]
    \begin{scope}[shift={(-3.2,0)}]
      \pic{base3}; \pic{cond12}; \pic{cond23}; \pic{cond13};
      \node[below=2pt] at (id)   {$t_1$};
      \node[left=2pt]  at (s1)   {$t_1$};
      \node[right=2pt] at (s2)   {$t_1$};
      \node[left=2pt]  at (s1s2) {$t_1$};
      \node[right=2pt] at (s2s1) {$t_1$};
      \node[above=2pt] at (w0)   {$t_1$};
      \node[pastille] at (0,0) {$t_1$};
    \end{scope}
    \begin{scope}[shift={(0,0)}]
      \pic{base3}; \pic{cond12}; \pic{cond23}; \pic{cond13};
      \node[below=2pt] at (id)   {$t_2$};
      \node[left=2pt]  at (s1)   {$t_2$};
      \node[right=2pt] at (s2)   {$t_2$};
      \node[left=2pt]  at (s1s2) {$t_2$};
      \node[right=2pt] at (s2s1) {$t_2$};
      \node[above=2pt] at (w0)   {$t_2$};
      \node[pastille] at (0,0) {$t_2$};
    \end{scope}
    \begin{scope}[shift={(3.2,0)}]
      \pic{base3}; \pic{cond12}; \pic{cond23}; \pic{cond13};
      \node[below=2pt] at (id)   {$t_3$};
      \node[left=2pt]  at (s1)   {$t_3$};
      \node[right=2pt] at (s2)   {$t_3$};
      \node[left=2pt]  at (s1s2) {$t_3$};
      \node[right=2pt] at (s2s1) {$t_3$};
      \node[above=2pt] at (w0)   {$t_3$};
      \node[pastille] at (0,0) {$t_3$};
    \end{scope}
  \end{scope}
  \begin{scope}[shift={(0,-1.8)}]
    \begin{scope}[shift={(-3.2,0)}]
      \pic{base3}; \pic{cond12}; \pic{cond23}; \pic{cond13};
      \node[below=2pt] at (id)   {$t_1$};
      \node[left=2pt]  at (s1)   {$t_2$};
      \node[right=2pt] at (s2)   {$t_1$};
      \node[left=2pt]  at (s1s2) {$t_2$};
      \node[right=2pt] at (s2s1) {$t_3$};
      \node[above=2pt] at (w0)   {$t_3$};
      \node[pastille] at (0,0) {$x_1$};
    \end{scope}
    \begin{scope}[shift={(0,0)}]
      \pic{base3}; \pic{cond12}; \pic{cond23}; \pic{cond13};
      \node[below=2pt] at (id)   {$t_2$};
      \node[left=2pt]  at (s1)   {$t_1$};
      \node[right=2pt] at (s2)   {$t_3$};
      \node[left=2pt]  at (s1s2) {$t_3$};
      \node[right=2pt] at (s2s1) {$t_1$};
      \node[above=2pt] at (w0)   {$t_2$};
      \node[pastille] at (0,0) {$x_2$};
    \end{scope}
    \begin{scope}[shift={(3.2,0)}]
      \pic{base3}; \pic{cond12}; \pic{cond23}; \pic{cond13};
      \node[below=2pt] at (id)   {$t_3$};
      \node[left=2pt]  at (s1)   {$t_3$};
      \node[right=2pt] at (s2)   {$t_2$};
      \node[left=2pt]  at (s1s2) {$t_1$};
      \node[right=2pt] at (s2s1) {$t_2$};
      \node[above=2pt] at (w0)   {$t_1$};
      \node[pastille] at (0,0) {$x_3$};
    \end{scope}
  \end{scope}
\end{tikzpicture}\end{equation}

See \autoref{app:flow-up} for more examples.

\section{Domains and codomains}\label{sec:domain-codomain}

The divided difference operators aren't defined on all of $H$, and don't preserve the subrings $H_C$ in general.
But, motivated by the following lemma, we say that a set of conditions $C$ is \emph{$s_i$-stable} if
\begin{equation}
  s_i \in C
  \quad\text{and}\quad
  s_i C s_i = C.
\end{equation}

\begin{lemma}\label{lem:stability}
  If $C$ is $s_i$-stable, then $\partial_i$ is defined on $H_C$, and $\partial_i(H_C) \subseteq H_C$.
\end{lemma}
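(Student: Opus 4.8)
The plan is to treat the two assertions separately. The domain claim is immediate: since $C$ is $s_i$-stable we have $s_i \in C$, hence $H_C \subseteq H_{s_i}$, and as $H_{s_i}$ is the full domain of $\partial_i$ the operator is defined on all of $H_C$. The substance is the containment $\partial_i(f) \in H_\tau$ for every $f \in H_C$ and every $\tau = (a \leftrightarrow b) \in C$, which I would verify one condition $\tau$ at a time, working pointwise in $\CC[t_1, \ldots, t_n]$. First I would record the pointwise readings of the data: $f$ satisfies condition $\tau$ exactly when $\bigl(t_{v(a)} - t_{v(b)}\bigr)$ divides $f(v) - f(v\tau)$ for every $v \in S_n$, and $\partial_i(f)(v) = \bigl(f(v) - f(vs_i)\bigr)/\bigl(t_{v(i)} - t_{v(i+1)}\bigr)$, which is a genuine polynomial because $f \in H_{s_i}$. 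The engine of the whole argument is unique factorization in $\CC[t_1, \ldots, t_n]$: the linear forms $t_j - t_k$ are pairwise coprime, so divisibility of a polynomial presented as a ratio of known multiples can be tested after clearing denominators.

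I would split into cases by how $\tau$ meets $\{i, i+1\}$. When $\tau = s_i$, the claim is immediate from $\partial_i(f) * s_i = \partial_i(f)$ (item~7 of \autoref{lem:compute}), which pointwise reads $\partial_i(f)(vs_i) = \partial_i(f)(v)$, so the relevant difference vanishes. When $\{a,b\} \cap \{i,i+1\} = \emptyset$, the transposition $\tau$ commutes with $s_i$ and fixes $t_{v(i)}$ and $t_{v(i+1)}$, so $\partial_i(f)(v)$ and $\partial_i(f)(v\tau)$ share the denominator $t_{v(i)} - t_{v(i+1)}$; the numerator $\bigl(f(v) - f(v\tau)\bigr) - \bigl(f(vs_i) - f(v\tau s_i)\bigr)$ is a difference of two terms each divisible by $t_{v(a)} - t_{v(b)}$, using condition $\tau$ at $v$ and at $vs_i$ together with $s_i \tau s_i = \tau$. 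Since $t_{v(a)} - t_{v(b)}$ is coprime to the denominator, the quotient remains divisible by it.

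The main obstacle is the remaining case, where $\tau$ shares exactly one index with $\{i,i+1\}$, say $\tau = (i \leftrightarrow c)$ with $c \notin \{i,i+1\}$, because then $\partial_i(f)(v)$ and $\partial_i(f)(v\tau)$ carry distinct denominators $\delta_1 = t_{v(i)} - t_{v(i+1)}$ and $\delta_2 = t_{v(c)} - t_{v(i+1)}$, so there is no common-denominator cancellation. The key observation I would exploit is that the target divisor $\epsilon = t_{v(i)} - t_{v(c)}$ equals $\delta_1 - \delta_2$, and that $\epsilon, \delta_1, \delta_2$ are pairwise coprime. Clearing denominators writes the polynomial $\partial_i(f)(v) - \partial_i(f)(v\tau)$ as $N/(\delta_1\delta_2)$, and coprimality reduces divisibility by $\epsilon$ to showing $\epsilon \mid N$. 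Reducing modulo $\epsilon$ collapses $\delta_1 \equiv \delta_2$, giving $N \equiv \delta_2 M$ with $M = f(v) - f(vs_i) - f(v\tau) + f(v\tau s_i)$, so it suffices to prove $\epsilon \mid M$. Writing $M = \bigl(f(v) - f(v\tau)\bigr) - \bigl(f(vs_i) - f(v\tau s_i)\bigr)$, the first bracket is divisible by $\epsilon$ by condition $\tau$ at $v$; for the second, the identity $v\tau s_i = (vs_i)(s_i \tau s_i)$ with $s_i \tau s_i = (i+1 \leftrightarrow c)$ shows it is governed by condition $s_i \tau s_i$ at $vs_i$, whose edge label $t_{v(i)} - t_{v(c)}$ is again $\epsilon$. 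This is precisely the step that uses $s_i$-stability in full force: we need the conjugate $s_i \tau s_i$, not merely $s_i$, to lie in $C$. Finally, because $\partial_i(f) * s_i = \partial_i(f)$ makes conditions $\tau$ and $s_i \tau s_i$ equivalent for $\partial_i(f)$, the symmetric subcase $\tau = (i+1 \leftrightarrow c)$ follows with no further computation, completing the verification.
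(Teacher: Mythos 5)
Your proof is correct, but it takes a genuinely different route from the paper. You verify the divisibility conditions directly and pointwise in the GKM description: for each $\tau \in C$ you split into cases by $\abs{\set{a,b} \cap \set{i,i+1}}$, and in the crucial mixed case you clear the two distinct denominators $\delta_1, \delta_2$, observe $\epsilon = \delta_1 - \delta_2$, and use pairwise coprimality of the linear forms $t_j - t_k$ in the UFD $\CC[t_1,\ldots,t_n]$ to reduce everything to $\epsilon \mid M$ with $M = f(v) - f(vs_i) - f(v\tau) + f(v\tau s_i)$, which follows from condition $\tau$ at $v$ and condition $s_i\tau s_i$ at $vs_i$ --- correctly pinpointing where full $s_i$-stability (and not merely $s_i \in C$) is needed, with the subcase $\tau = (i{+}1 \leftrightarrow c)$ dispatched via $\partial_i(f) * s_i = \partial_i(f)$. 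The paper instead performs a triple reduction: first to the minimal $s_i$-stable sets $C(i,\tau) = \set{s_i, \tau, s_i\tau s_i}$, then via star-action transport to three normalized sets $C(1,s_1)$, $C(1,s_2)$, $C(1,s_3)$, then via the coset decomposition $f = \sum_w w \cdot f_w$ to the $\CC[t_1,\ldots,t_n]$-submodule $1_{\gen{C}} H_C$, and finally computes $\partial_1$ explicitly on exhibited bases of these small submodules. Your argument buys self-containedness and uniformity: it needs no exhibited bases (the paper's finite check tacitly relies on the stated elements actually spanning $1_{\gen{C}} H_C$, a flow-up-type fact it does not belabor) and no transport machinery, just unique factorization. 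The paper's approach buys explicitness: the reduction lemmas (star-action conjugation of conditions, coset decomposition under the dot action) and the concrete bases are reusable and illuminate the module structure that the rest of the paper, and the appendix on flow-up bases, builds on.
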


Since $H_C \subseteq H_D$ whenever $D \subseteq C$, we can also consider the largest $s_i$-stable subset of $C$ and phrase this equivalently as:

\begin{corollary}
  If $s_i \in C$ and $D = C \cap s_i C s_i$, then $\partial_i$ is defined on $H_C$, and $\partial_i(H_C) \subseteq H_D$.
\end{corollary}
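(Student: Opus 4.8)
The plan is to prove the corollary directly by working componentwise, identifying $H$ with the product ring $\prod_{v \in S_n} \CC[t_1, \dots, t_n]$; the lemma is then the special case in which $C$ is already $s_i$-stable, so that $D = C$. Since $s_i \in C$ we have $H_C \subseteq H_{s_i}$, so $\partial_i$ is defined on $H_C$, and for $f \in H_C$ the function $g = \partial_i(f)$ lies in $H$, with $v$-component $g(v) = \bigl(f(v) - f(vs_i)\bigr)/\bigl(t_{v(i)} - t_{v(i+1)}\bigr)$ a genuine polynomial for every $v$. In this language, a transposition condition $\tau = (j \leftrightarrow k)$ holds for an element $e \in H$ precisely when $t_{v(j)} - t_{v(k)}$ divides $e(v) - e(v\tau)$ for every $v$ (the quotient being unique since $x_j - x_k$ is a nonzerodivisor), so proving $g \in H_D$ amounts to checking this divisibility for $g$ and each $\tau \in D$. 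The observation that drives everything is that $\tau \in D = C \cap s_i C s_i$ says exactly that both $\tau \in C$ and $s_i \tau s_i \in C$, and these are precisely the two facts about $f$ that the computation will consume.

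Fix such a $\tau$ and a permutation $v$, and write $\pi = t_{v(j)} - t_{v(k)}$; I would verify three congruences modulo $\pi$ in the integral domain $\CC[t_1, \dots, t_n]/(\pi)$. First, for every position $p$ one has $t_{v(\tau(p))} \equiv t_{v(p)}$, because $\tau$ either fixes $p$ or swaps it with the other member of $\{j, k\}$, whose two $t$-values coincide modulo $\pi$; taking $p \in \{i, i+1\}$ shows the denominator $t_{v(\tau(i))} - t_{v(\tau(i+1))}$ of $g(v\tau)$ is congruent to the denominator $t_{v(i)} - t_{v(i+1)}$ of $g(v)$. Second, condition $\tau \in C$ at the slot $v$ gives $f(v) \equiv f(v\tau)$. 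Third — the step that uses the stability — condition $s_i \tau s_i \in C$ at the shifted slot $vs_i$ gives $f(vs_i) \equiv f(v\tau s_i)$: since $s_i \tau s_i = (s_i(j) \leftrightarrow s_i(k))$, its modulus at $vs_i$ is $t_{vs_i(s_i(j))} - t_{vs_i(s_i(k))} = t_{v(j)} - t_{v(k)} = \pi$, while its partner slot is $vs_i \cdot (s_i \tau s_i) = v\tau s_i$.

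Combining these (all congruences modulo $\pi$),
\[
  g(v\tau)\bigl(t_{v(i)} - t_{v(i+1)}\bigr)
  \equiv g(v\tau)\bigl(t_{v(\tau(i))} - t_{v(\tau(i+1))}\bigr)
  = f(v\tau) - f(v\tau s_i)
  \equiv f(v) - f(vs_i)
  = g(v)\bigl(t_{v(i)} - t_{v(i+1)}\bigr),
\]
so $\bigl(g(v) - g(v\tau)\bigr)\bigl(t_{v(i)} - t_{v(i+1)}\bigr) \equiv 0$. For $\tau \neq s_i$ the factor $t_{v(i)} - t_{v(i+1)}$ is a nonzero element of the domain $\CC[t_1, \dots, t_n]/(\pi)$ — it could vanish only if $\{i, i+1\} = \{j, k\}$ — hence a nonzerodivisor, and cancelling it gives $g(v) \equiv g(v\tau)$, i.e. $\pi$ divides $g(v) - g(v\tau)$. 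The excluded case $\tau = s_i$ is immediate, since $g * s_i = g$ (\autoref{lem:compute}) forces $g(v) = g(vs_i)$. As $v$ and $\tau$ were arbitrary this establishes $g \in H_D$, and hence the corollary.

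The main obstacle is the overlapping case $\abs{\{j, k\} \cap \{i, i+1\}} = 1$, where $\tau$ and $s_i$ fail to commute, so that $g(v)$ and $g(v\tau)$ are formed with different denominators and from values of $f$ at genuinely different slots. The argument hinges on the fact that all of these discrepancies vanish after reduction modulo $\pi$, and in particular on recognizing the second numerator congruence as the $s_i$-conjugated condition $s_i \tau s_i$ read off at the shifted slot $vs_i$ — which is exactly the ingredient made available by passing to $D = C \cap s_i C s_i$.
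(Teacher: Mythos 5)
Your proof is correct, but it follows a genuinely different route from the paper. The paper treats the corollary as a mere repackaging of \autoref{lem:stability}: the set $D = C \cap s_i C s_i$ is itself $s_i$-stable (it contains $s_i$ and equals its own conjugate), so $\partial_i(H_C) \subseteq \partial_i(H_D) \subseteq H_D$, and all the real work goes into the lemma, which is proved by a cascade of reductions --- first to the minimal $s_i$-stable sets $C(i, \tau) = \set{s_i, \tau, s_i \tau s_i}$, then via transport along the star action to the three concrete sets $C(1, s_1)$, $C(1, s_2)$, $C(1, s_3)$, then via the coset decomposition $f = \sum_w w \cdot f_w$ to the $\CC[t_1, \ldots, t_n]$-submodule $1_{\gen{C}} H_C$ of functions supported on $\gen{C}$, and finally an explicit basis of each such submodule on which $\partial_1$ is computed by hand. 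You instead verify the corollary directly and componentwise: for $\tau = (j \leftrightarrow k) \in D$ and $v \in S_n$ you reduce modulo the linear (hence prime) form $\pi = t_{v(j)} - t_{v(k)}$, consume $\tau \in C$ at slot $v$ and $s_i \tau s_i \in C$ at slot $v s_i$ --- your conjugation bookkeeping $v s_i \cdot (s_i \tau s_i) = v \tau s_i$ and the modulus computation $t_{v s_i(s_i(j))} - t_{v s_i(s_i(k))} = \pi$ are exactly right --- and then cancel $t_{v(i)} - t_{v(i+1)}$, whose image in the domain $\CC[t_1, \ldots, t_n]/(\pi)$ is nonzero precisely when $\set{i, i+1} \neq \set{j, k}$, i.e.\ when $\tau \neq s_i$; the remaining case $\tau = s_i$ is disposed of by $\partial_i(f) * s_i = \partial_i(f)$ from \autoref{lem:compute}. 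Your argument subsumes the lemma as the special case $D = C$, is shorter and more uniform (the paper's three-way case analysis collapses into the single dichotomy $\tau = s_i$ or not), and isolates transparently why both memberships $\tau \in C$ and $s_i \tau s_i \in C$ are needed; what it does not produce are the explicit bases of the modules $1_{\gen{C}} H_C$ in the three minimal cases, which the paper's longer computation exhibits along the way and which have independent illustrative value (they are close cousins of the flow-up bases in \autoref{app:flow-up}).
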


\begin{proof}[Proof of \autoref*{lem:stability}]
Since $s_i \in C$, it's immediate that $\partial_i$ is defined on $H_C$.
To show that $\partial_i(H_C) \subseteq H_C$, we will reduce to the case of three specific sets $C$, each of which can be checked on a small $\CC[t_1, \ldots, t_n]$-submodule of $H_C$, and provide an explicit basis for this submodule.

As a first reduction, it suffices to check that $\partial_i(H_C) \subseteq H_C$ for the \emph{minimal} $s_i$-stable sets $C$, which are of the form
\begin{equation}
  C(i, \tau) = \set{s_i,\; \tau,\; s_i \tau s_i}
  \quad\text{for a transposition $\tau \in S_n$,}
\end{equation}
because for a general $s_i$-stable set $C$, we have
\begin{equation}
  H_C = \bigcap_{\tau \in C} H_{C(i, \tau)}.
\end{equation}

As a second reduction, it suffices to check the three specific $s_1$-stable sets
\begin{equation}\begin{aligned}
  C(1, s_1) &= \set{(1 \leftrightarrow 2)} \\
  C(1, s_2) &= \set{(1 \leftrightarrow 2), (1 \leftrightarrow 3), (2 \leftrightarrow 3)} \\
  C(1, s_3) &= \set{(1 \leftrightarrow 2), (3 \leftrightarrow 4)},
\end{aligned}\end{equation}
because each $C(i, \tau)$ can be reduced to one of them, depending on the case:
\begin{description}
  \item[Case $s_i = \tau$:]
    Let $w \in S_n$ be a permutation with
    \begin{equation}
      w(i) = 1, \qquad
      w(i+1) = 2.
    \end{equation}
    Then, using ``$\subseteq^?$'' to denote the inclusion we are reducing to, we have
    \begin{equation}\begin{aligned}
      \partial_i\paren[\big]{H_{C(i, \tau)}}
      = \partial_i\paren[\big]{H_{C(1, s_1)} * w}
      &= \partial_1\paren[\big]{H_{C(1, s_1)}} * w \\
      &\subseteq^? H_{C(1, s_1)} * w = H_{C(i, \tau)}.
    \end{aligned}\end{equation}

  \item[Case $s_i \neq \tau$ and $\tau \neq s_i \tau s_i$:]
    In this case, $\tau$ and $s_i \tau s_i$ must be of the form $(i \leftrightarrow k)$ and $(i+1 \leftrightarrow k)$, in some order, for some $k \notin \set{i, i+1}$.
    Let $w \in S_n$ be a permutation with
    \begin{equation}
      w(i) = 1, \qquad
      w(i+1) = 2, \qquad
      w(k) = 3.
    \end{equation}
    Then, we have
    \begin{equation}\begin{aligned}
      \partial_i\paren[\big]{H_{C(i, \tau)}}
      = \partial_i\paren[\big]{H_{C(1, s_2)} * w}
      &= \partial_1\paren[\big]{H_{C(1, s_2)}} * w \\
      &\subseteq^? H_{C(1, s_2)} * w = H_{C(i, \tau)}
    \end{aligned}\end{equation}

  \item[Case $s_i \neq \tau$ and $\tau = s_i \tau s_i$:]
    In this case, $\tau$ must be of the form $(j \leftrightarrow k)$ for some $\set{j, k}$ disjoint from $\set{i, i+1}$.
    Take $w \in S_n$ such that
    \begin{equation}
      w(i) = 1, \quad
      w(i+1) = 2, \quad
      w(j) = 3, \quad
      w(k) = 4.
    \end{equation}
    Then, we have
    \begin{equation}\begin{aligned}
      \partial_i\paren[\big]{H_{C(i, \tau)}}
      = \partial_i\paren[\big]{H_{C(1, s_3)} * w}
      &= \partial_1\paren[\big]{H_{C(1, s_3)}} * w \\
      &\subseteq^? H_{C(1, s_3)} * w = H_{C(i, \tau)}
    \end{aligned}\end{equation}
\end{description}

As an independent reduction, it suffices to check that $\partial_i(H_C) \subseteq H_C$ on a certain $\CC[t_1, \ldots, t_n]$-submodule of $H_C$.
Specifically, let $\gen{C}$ be the subgroup of $S_n$ generated by $C$, and let $1_{\gen{C}} \in H_C$ be defined by
\begin{equation}
  1_{\gen{C}}(v; t_1, \ldots, t_n) = \begin{cases}
    1 &\text{if $v \in \gen{C}$} \\
    0 &\text{otherwise.}
  \end{cases}
\end{equation}
Then, $1_{\gen{C}} H_C$ is the $\CC[t_1, \ldots, t_n]$-submodule of functions in $H_C$ which are zero outside of $\gen{C}$.
The unit element in $H_C$ decomposes as
\begin{equation}
  1 = \sum_w w \cdot 1_{\gen{C}},
\end{equation}
where the sum is over an arbitrary choice of representatives $w$ for the cosets $w \gen{C} \subseteq S_n$.
For an element $f \in H_C$ and a permutation $w \in S_n$, let
\begin{equation}
  f_w = 1_{\gen{C}} \, (w^{-1} \cdot f) \in 1_{\gen{C}} H_C.
\end{equation}
Then, the element $f$ can be decomposed as
\begin{equation}
  f = \sum_w w \cdot f_w,
\end{equation}
so that
\begin{equation}
  \partial_i(f) = \sum_w w \cdot \partial_i(f_w).
\end{equation}
Thus, $\partial_i(H_C) \subseteq H_C$ is implied by $\partial_i\paren[\big]{1_{\gen{C}} H_C} \subseteq H_C$.

Given all of these reductions, the remaining task is to compute $\partial_1$ on a $\CC[t_1, \ldots, t_n]$-linear basis of $1_{\gen{C}} H_C$ for $C \in \set{C(1, s_1),\; C(1, s_3),\; C(1, s_2)}$.
\begin{description}
  \item[Case $C = C(1, s_1)$:]
    The following two elements are a $\CC[t_1, \ldots, t_n]$-basis:
    \begin{equation}
      1_{\gen{C}} (x_1 - t_1), \qquad
      1_{\gen{C}}
    \end{equation}
    and the computation of $\partial_1$ on them is illustrated below:
    \begin{equation}\begin{tikzpicture}[baseline=0]
      \begin{scope}
        \pic{base1};
        \node[below=2pt] at (id) {$\id$};
        \node[above=2pt] at (s1) {$s_1$};
      \end{scope}
      \begin{scope}[shift={(2,0)}]
        \pic{base1};
        \node[below=2pt] at (id) {$0$};
        \node[above=2pt] at (s1) {$(t_2-t_1)$};
      \end{scope}
      \begin{scope}[shift={(4,0)}]
        \pic{base1};
        \node[below=2pt] at (id) {$1$};
        \node[above=2pt] at (s1) {$1$};
      \end{scope}
      \node at (5.9,0) {$0$};
      \path[every edge/.style=partial1,every node/.style=swap]
        (2.5,0) edge (3.5,0)
        (4.5,0) edge (5.5,0);
    \end{tikzpicture}\end{equation}

  \item[Case $C = C(1, s_3)$:]
    The following four elements are a $\CC[t_1, \ldots, t_n]$-basis:
    \begin{equation}\begin{aligned}
      & 1_{\gen{C}} (x_1 - t_1)(x_3 - t_3), &
      & 1_{\gen{C}} (x_3 - t_3), \\
      & 1_{\gen{C}} (x_1 - t_1), &
      & 1_{\gen{C}},
    \end{aligned}\end{equation}
    and the computation of $\partial_1$ on these elements is illustrated below, using the abbreviation $t_{ik} = (t_i - t_k)$:
    \begin{equation}\begin{tikzpicture}[baseline=0]
      \begin{scope}[shift={(2.2,0)}]
        \pic{base2};
        \node[below=2pt] at (id)   {$\id$};
        \node[left=1pt]  at (s1)   {$s_1$};
        \node[right=1pt] at (s3)   {$s_3$};
        \node[above=2pt] at (s1s3) {$s_1 s_3$};
      \end{scope}
      \begin{scope}[shift={(0,-2.7)}]
        \begin{scope}[shift={(0,0)}]
          \pic{base2};
          \node[below=2pt,gray] at (id)   {$0$};
          \node[left=2pt,gray]  at (s1)   {$0$};
          \node[right=2pt,gray] at (s3)   {$0$};
          \node[above=2pt]      at (s1s3) {$t_{21} t_{43}$};
        \end{scope}
        \begin{scope}[shift={(4.4,0)}]
          \pic{base2};
          \node[below=2pt,gray] at (id)   {$0$};
          \node[left=2pt,gray]  at (s1)   {$0$};
          \node[right=1pt]      at (s3)   {$t_{43}$};
          \node[above=2pt]      at (s1s3) {$t_{43}$};
        \end{scope}
        \node at (8,0) {$0$};
        \path[every edge/.style=partial1,every node/.style=swap]
          (1.4,0) edge (3,0)
          (6,0) edge (7.6,0);
      \end{scope}
      \begin{scope}[shift={(0,-5.4)}]
        \begin{scope}[shift={(0,0)}]
          \pic{base2};
          \node[below=2pt,gray] at (id)   {$0$};
          \node[left=1pt]       at (s1)   {$t_{21}$};
          \node[right=2pt,gray] at (s3)   {$0$};
          \node[above=2pt]      at (s1s3) {$t_{21}$};
        \end{scope}
        \begin{scope}[shift={(4.4,0)}]
          \pic{base2};
          \node[below=2pt] at (id)   {$1$};
          \node[left=2pt]  at (s1)   {$1$};
          \node[right=2pt] at (s3)   {$1$};
          \node[above=2pt] at (s1s3) {$1$};
        \end{scope}
        \node at (8,0) {$0$};
        \path[every edge/.style=partial1,every node/.style=swap]
          (1.4,0) edge (3,0)
          (5.8,0) edge (7.6,0);
      \end{scope}
    \end{tikzpicture}\end{equation}

  \pagebreak

  \item[Case $C = C(1, s_2)$:]
    The following six elements are a $\CC[t_1, \ldots, t_n]$-basis:
    \begin{equation}\begin{aligned}
      & 1_{\gen{C}} (x_2 - t_1)(x_1 - t_1)(x_1 - t_2), &
      & 1_{\gen{C}} (x_2 - t_1)(x_1 - t_1), \\
      & 1_{\gen{C}} (x_1 - t_1)(x_1 - t_2), &
      & 1_{\gen{C}} (t_3 - x_3), \\
      & 1_{\gen{C}} (x_1 - t_1), &
      & 1_{\gen{C}},
    \end{aligned}\end{equation}
    and the computation of $\partial_1$ on these elements is illustrated below, again using the abbreviation $t_{ik} = (t_i - t_k)$:
    \begin{equation}\begin{tikzpicture}[baseline=0]
      \begin{scope}[shift={(2.5,0)}]
        \pic{base3}; \pic{cond12}; \pic{cond23}; \pic{cond13};
        \node[below=2pt] at (id)   {$\id$};
        \node[left=4pt]  at (s1)   {$s_1$};
        \node[right=4pt] at (s2)   {$s_2$};
        \node[left=0pt]  at (s1s2) {$s_1 s_2$};
        \node[right=0pt] at (s2s1) {$s_2 s_1$};
        \node[above=2pt] at (w0)   {$w_0$};
      \end{scope}
      \begin{scope}[shift={(0,-3.2)}]
        \begin{scope}[shift={(0,0)}]
          \pic{base3}; \pic{cond12}; \pic{cond23}; \pic{cond13};
          \node[below=2pt,gray] at (id)   {$0$};
          \node[left=2pt,gray]  at (s1)   {$0$};
          \node[right=2pt,gray] at (s2)   {$0$};
          \node[left=2pt,gray]  at (s1s2) {$0$};
          \node[right=2pt,gray] at (s2s1) {$0$};
          \node[above=2pt]      at (w0)   {$t_{21} t_{31} t_{32}$};
        \end{scope}
        \begin{scope}[shift={(5,0)}]
          \pic{base3}; \pic{cond12}; \pic{cond23}; \pic{cond13};
          \node[below=2pt,gray] at (id)   {$0$};
          \node[left=2pt,gray]  at (s1)   {$0$};
          \node[right=2pt,gray] at (s2)   {$0$};
          \node[left=0pt]       at (s1s2) {$t_{21} t_{31}$};
          \node[right=2pt,gray] at (s2s1) {$0$};
          \node[above=2pt]      at (w0)   {$t_{21} t_{31}$};
        \end{scope}
        \node at (8.8,0) {$0$};
        \path[every edge/.style=partial1,every node/.style=swap]
          (1.7,0) edge (3.3,0)
          (6.7,0) edge (8.3,0);
      \end{scope}
      \begin{scope}[shift={(0,-6.4)}]
        \begin{scope}[shift={(0,0)}]
          \pic{base3}; \pic{cond12}; \pic{cond23}; \pic{cond13};
          \node[below=2pt,gray] at (id)   {$0$};
          \node[left=2pt,gray]  at (s1)   {$0$};
          \node[right=2pt,gray] at (s2)   {$0$};
          \node[left=2pt,gray]  at (s1s2) {$0$};
          \node[right=0pt]      at (s2s1) {$t_{31} t_{32}$};
          \node[above=2pt]      at (w0)   {$t_{31} t_{32}$};
        \end{scope}
        \begin{scope}[shift={(5,0)}]
          \pic{base3}; \pic{cond12}; \pic{cond23}; \pic{cond13};
          \node[below=2pt,gray] at (id)   {$0$};
          \node[left=2pt,gray]  at (s1)   {$0$};
          \node[right=1pt]      at (s2)   {$t_{32}$};
          \node[left=1pt]       at (s1s2) {$t_{31}$};
          \node[right=1pt]      at (s2s1) {$t_{32}$};
          \node[above=2pt]      at (w0)   {$t_{31}$};
        \end{scope}
        \node at (8.8,0) {$0$};
        \path[every edge/.style=partial1,every node/.style=swap]
          (1.7,0) edge (3.3,0)
          (6.7,0) edge (8.3,0);
      \end{scope}
      \begin{scope}[shift={(0,-9.6)}]
        \begin{scope}[shift={(0,0)}]
          \pic{base3}; \pic{cond12}; \pic{cond23}; \pic{cond13};
          \node[below=2pt,gray] at (id)   {$0$};
          \node[left=1pt]       at (s1)   {$t_{21}$};
          \node[right=2pt,gray] at (s2)   {$0$};
          \node[left=1pt]       at (s1s2) {$t_{21}$};
          \node[right=1pt]      at (s2s1) {$t_{31}$};
          \node[above=2pt]      at (w0)   {$t_{31}$};
        \end{scope}
        \begin{scope}[shift={(5,0)}]
          \pic{base3}; \pic{cond12}; \pic{cond23}; \pic{cond13};
          \node[below=2pt] at (id)   {$1$};
          \node[left=2pt]  at (s1)   {$1$};
          \node[right=2pt] at (s2)   {$1$};
          \node[left=2pt]  at (s1s2) {$1$};
          \node[right=2pt] at (s2s1) {$1$};
          \node[above=2pt] at (w0)   {$1$};
        \end{scope}
        \node at (8.8,0) {$0$};
        \path[every edge/.style=partial1,every node/.style=swap]
          (1.7,0) edge (3.3,0)
          (6.7,0) edge (8.3,0);
      \end{scope}
    \end{tikzpicture}\end{equation}
\end{description}
This completes the proof of \autoref{lem:stability}.
\end{proof}

\section{Decompositions of dot representations}

Now that we have a handle on the domain and range of the divided difference operators, we can use them to decompose of some of the subrings $H_C$.

\subsection{The \texorpdfstring{$s_i$}{sᵢ}-stable case}

\begin{theorem}\label{thm:stable}
  Let $C$ be an $s_i$-stable set of divisibility conditions.
  Then, we have the internal direct sum decomposition
  \begin{equation}
    H_C = H_C^{*s_i} \oplus (x_i - x_{(i+1)}) H_C^{*s_i}
  \end{equation}
  as $\CC[t_1, \ldots, t_n]$-submodules of $H$ equipped with the dot action, where
  \begin{equation}
    H_C^{*s_i} = \set{f \in H_C \mid f * s_i = f}.
  \end{equation}
\end{theorem}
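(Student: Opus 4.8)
The plan is to use the divided difference operator $\partial_i$ as the engine for the splitting, since \autoref{lem:stability} guarantees that $\partial_i$ is defined on $H_C$ and sends $H_C$ into itself. The two structural facts I would lean on are that $H_C^{*s_i}$ is exactly the kernel of $\partial_i$ on $H_C$ (by \autoref{lem:compute}, part~4, $\partial_i(f) = 0$ if and only if $f * s_i = f$), and that the image $\partial_i(H_C)$ lies inside $H_C^{*s_i}$ (by \autoref{lem:compute}, part~7, every element $\partial_i(f)$ is $s_i$-invariant). Writing $u = x_i - x_{(i+1)}$, I would also record that $u \in H_C$ (the $x$ variables satisfy every condition), that $u * s_i = -u$, and that $\partial_i(u) = \partial_i(x_i) - \partial_i(x_{(i+1)}) = 2$ by parts~11 and~12.

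For existence of the decomposition, I would start from an arbitrary $f \in H_C$ and set $g' = \tfrac{1}{2}\partial_i(f)$ and $g = f - u\,g'$. Then $g' \in H_C^{*s_i}$, since $\partial_i(f) \in H_C$ by \autoref{lem:stability} and is $s_i$-invariant, while $g \in H_C$ because $H_C$ is a subring. The key computation is $\partial_i(g) = 0$: by the product rule (\autoref{lem:compute}, part~2),
\[
  \partial_i(u\,g') = \partial_i(u)\,g' + (u * s_i)\,\partial_i(g') = 2 g' - u\,\partial_i(g'),
\]
and $\partial_i(g') = \tfrac{1}{2}\partial_i^2(f) = 0$, so $\partial_i(u\,g') = 2g' = \partial_i(f)$ and hence $\partial_i(g) = \partial_i(f) - \partial_i(f) = 0$. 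By part~4 this gives $g \in H_C^{*s_i}$, and $f = g + u\,g'$ exhibits $f$ as an element of $H_C^{*s_i} + u\,H_C^{*s_i}$.

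For directness, I would show $H_C^{*s_i} \cap u\,H_C^{*s_i} = \set{0}$: if $h = u\,g'$ with both $h$ and $g'$ in $H_C^{*s_i}$, then applying $* s_i$ gives $h = h * s_i = (u * s_i)(g' * s_i) = -u\,g' = -h$, forcing $h = 0$ since we are over $\CC$. This already settles uniqueness. To see that $u\,H_C^{*s_i}$ is a genuine (degree-shifted) copy of $H_C^{*s_i}$, I would note that $u$ is a non-zero-divisor in $H \cong \prod_{v \in S_n} \CC[t_1, \ldots, t_n]$, because its value at each $v$ is $t_{v(i)} - t_{v(i+1)} \neq 0$; thus multiplication by $u$ is injective. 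Finally, I would observe that both summands are stable under the dot action: the dot action commutes with the star action and fixes each $x_k$, so it preserves both $H_C^{*s_i}$ and $u\,H_C^{*s_i}$, yielding the claimed decomposition of dot-representations.

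I expect the only real subtlety to be bookkeeping rather than genuine difficulty. The substance is the one-line product-rule computation showing $\partial_i(f - u\,g') = 0$, together with the observation that multiplying by $u$ flips $s_i$-invariance to $s_i$-anti-invariance; everything else (membership in $H_C$, the non-zero-divisor remark, and dot-invariance of the summands) is routine given \autoref{lem:stability} and \autoref{lem:compute}.
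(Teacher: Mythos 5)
Your proof is correct and takes essentially the same route as the paper: both rest on \autoref{lem:stability} together with the pair of maps $\partial_i$ and multiplication by $(x_i - x_{(i+1)})/2$, using parts 2, 4, 7, 11, 12 of \autoref{lem:compute}. The only difference is presentational --- the paper first splits $H_C$ into the $\pm 1$-eigenspaces of the involution $f \mapsto f * s_i$ and then identifies the $(-1)$-eigenspace with $(x_i - x_{(i+1)}) H_C^{*s_i}$, whereas you build the complementary projections directly (indeed your $g = f - \tfrac{1}{2}(x_i - x_{(i+1)})\,\partial_i(f)$ is exactly $f * (1 + s_i)/2$, and your sign argument for directness is the eigenspace decomposition in disguise) --- so the underlying computation is identical.
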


\begin{proof}
  Since $C$ is $s_i$-stable, the subgroup $\set{\id, s_i} \subseteq S_n$ of order two acts on the right on $H_C$ by the star action, and we have the natural decomposition
  \begin{equation}
    H_C = H_C^{*s_i} \oplus H_C^{*-s_i},
  \end{equation}
  where we write
  \begin{equation}
    H_C^{*-s_i} = \set{f \in H_C \mid f * s_i = -f}
  \end{equation}
  for the $(-1)$-eigenspace of the involution $f \mapsto f * s_i$.
  We claim that
  \begin{equation}
    H_C^{*-s_i} = (x_i - x_{(i+1)}) H_C^{*s_i},
  \end{equation}
  because we further claim that the maps
  \begin{equation}\begin{tikzpicture}[baseline=(a.base)]
    \node (a) at (0,0) {$H_C^{*-s_i}$};
    \node (b) at (4,0) {$H_C^{*s_i}$};
    \path[->]
    ($(a.east)+(0,3pt)$) edge node[above] {$\partial_i$} ($(b.west)+(0,3pt)$)
    ($(b.west)-(0,3pt)$) edge node[below] {$(x_i - x_{(i+1)})/2$} ($(a.east)-(0,3pt)$);
  \end{tikzpicture}\end{equation}
  form a bijection.
  This follows from the facts that
  \begin{equation}\begin{gathered}
    \partial_i(H_C^{*-s_i}) \subseteq \partial_i(H_C) \subseteq H_C^{*s_i}, \\
    (x_i - x_{(i+1)}) H_C^{*s_i} \subseteq H_C^{*-s_i}, \\
    \partial_i\paren[\big]{(x_i - x_{(i+1)}) f} / 2 = f * (1 + s_i) / 2 = f
    \quad\text{when $f \in H_C^{*s_i}$}, \\
    (x_i - x_{(i+1)}) \partial_i(f) / 2 = f * (1 - s_i) / 2 = f
    \quad\text{when $f \in H_C^{*-s_i}$},
  \end{gathered}\end{equation}
  which can be verified by direct computation.
\end{proof}

The proof of \autoref{thm:stable} yields an additional fact:
\begin{corollary}
  The kernel and image of $\partial_i : H_C \to H_C$ are equal to $H_C^{*s_i}$ when $C$ is $s_i$-stable.
\end{corollary}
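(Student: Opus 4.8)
The plan is to read off the claim directly from the bijection established in the proof of \autoref{thm:stable}. That proof shows that $\partial_i$ and multiplication by $(x_i - x_{(i+1)})/2$ are mutually inverse bijections between $H_C^{*-s_i}$ and $H_C^{*s_i}$; I would leverage exactly these two ingredients, together with the splitting $H_C = H_C^{*s_i} \oplus H_C^{*-s_i}$, to compute the kernel and image of $\partial_i$ viewed as a map on all of $H_C$.

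First I would compute the image. Every element of $H_C$ decomposes uniquely as $f = f^+ + f^-$ with $f^+ \in H_C^{*s_i}$ and $f^- \in H_C^{*-s_i}$. By \autoref{lem:compute}(4), $\partial_i(f^+) = 0$ since $f^+ * s_i = f^+$, so $\partial_i(f) = \partial_i(f^-)$. The proof of \autoref{thm:stable} shows $\partial_i$ restricts to a bijection from $H_C^{*-s_i}$ onto $H_C^{*s_i}$, so as $f^-$ ranges over $H_C^{*-s_i}$ the values $\partial_i(f^-)$ range over all of $H_C^{*s_i}$. Hence the image of $\partial_i : H_C \to H_C$ is exactly $H_C^{*s_i}$.

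Next I would compute the kernel. Again writing $f = f^+ + f^-$, we have $\partial_i(f) = \partial_i(f^-)$, so $f \in \ker \partial_i$ if and only if $\partial_i(f^-) = 0$. Since $\partial_i$ is injective on $H_C^{*-s_i}$ (being one half of the established bijection), $\partial_i(f^-) = 0$ forces $f^- = 0$, i.e.\ $f = f^+ \in H_C^{*s_i}$. Conversely every element of $H_C^{*s_i}$ lies in the kernel by \autoref{lem:compute}(4). Hence $\ker \partial_i = H_C^{*s_i}$ as well.

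I do not anticipate any genuine obstacle here: the entire content is already packaged in the bijection from the preceding theorem, and the corollary is essentially a bookkeeping observation about how $\partial_i$ acts through the eigenspace decomposition. The only point requiring minor care is that $\partial_i(H_C) \subseteq H_C^{*s_i}$ (so that the codomain $H_C$ in the statement can indeed be replaced by the subspace $H_C^{*s_i}$), but this is precisely the first displayed fact in the proof of \autoref{thm:stable}, combined with \autoref{lem:compute}(7).
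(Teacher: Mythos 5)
Your proposal is correct and is exactly the argument the paper intends: the corollary is stated as a byproduct of the proof of \autoref{thm:stable}, and you unpack it via the same eigenspace decomposition $H_C = H_C^{*s_i} \oplus H_C^{*-s_i}$ and the bijection given by $\partial_i$ and multiplication by $(x_i - x_{(i+1)})/2$. (The only simplification available is that \autoref{lem:compute}(4) already gives $\ker \partial_i = H_C^{*s_i}$ directly, without invoking injectivity on the $(-1)$-eigenspace.)
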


\subsection{The almost-\texorpdfstring{$s_i$}{sᵢ}-stable case}

Motivated by the following decomposition, we define the notion of almost-stability. We say that a set of divisibility conditions $C$ is \emph{almost-$s_i$-stable} if $s_i \in C$ and there is a unique transposition $\tau$ such that $\tau \in C$ but $s_i \tau s_i \notin C$. If this is the case, then the sets
\begin{equation}
  C_{-} = C \setminus \set{\tau}, \qquad
  C_{+} = C \cup \set{s_i \tau s_i},
\end{equation}
obtained from $C$ by removing or adding a single condition, are $s_i$-stable.

Note that, for sets of divisibility conditions obtained from Hessenberg functions as in~\eqref{eq:hessenberg-conditions}, the notion of almost-stability corresponds to the notion of a modular triple in~\eqref{eq:modular-csf}.

\begin{theorem}\label{thm:almost-stable}
  Let $C$ be an almost-$s_i$-stable set of divisibility conditions, so that $C_{-} = C \setminus \set{\tau}$ and $C_{+} = C \cup \set{s_i \tau s_i}$ are $s_i$-stable.
  If $\tau = (i \leftrightarrow k)$, then we have the internal direct sum decomposition
  \begin{equation}
    H_C = H_{C_{+}}^{*s_i} \oplus (x_i - x_k) H_{C_{-}}^{*s_i}
  \end{equation}
  as $\CC[t_1, \ldots, t_n]$-submodules of $H$ equipped with the dot action.
  Otherwise, $\tau = (i+1 \leftrightarrow k)$ and we have the internal direct sum decomposition
  \begin{equation}
    H_C = H_{C_{+}}^{*s_i} \oplus (x_k - x_{(i+1)}) H_{C_{-}}^{*s_i}.
  \end{equation}
\end{theorem}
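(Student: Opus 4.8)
The plan is to use the divided difference operator $\partial_i$ as the projection that splits $H_C$. The governing picture is the chain of inclusions $H_{C_+} \subseteq H_C \subseteq H_{C_-}$, in which both $C_-$ and $C_+$ are $s_i$-stable, so that \autoref{thm:stable} and \autoref{lem:stability} apply to them directly. I will realize the claimed decomposition as $H_C = \ker(\partial_i|_{H_C}) \oplus \operatorname{im}(s)$, where $s$ is the section given by multiplication by the generator of the condition $\tau$. Throughout I treat the case $\tau = (i \leftrightarrow k)$; the case $\tau = (i+1 \leftrightarrow k)$ is identical after replacing the multiplier $(x_i - x_k)$ by $(x_k - x_{(i+1)})$, which is exactly the sign choice making $\partial_i$ of the multiplier equal to $+1$.

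First I record the easy containments and the behaviour of $\partial_i$ on $H_C$. Since $s_i \in C$, the operator $\partial_i$ is defined on $H_C$; and taking $D = C \cap s_i C s_i = C_-$ in the corollary to \autoref{lem:stability}, together with item (7) of \autoref{lem:compute}, gives $\partial_i(H_C) \subseteq H_{C_-}^{*s_i}$. On the other side, $H_{C_+}^{*s_i} \subseteq H_{C_+} \subseteq H_C$, and for $h \in H_{C_-}^{*s_i}$ the product $(x_i - x_k) h$ satisfies every condition of $C_-$ and, being a multiple of $(x_i - x_k)$, also the condition $\tau$, so it lies in $H_{C_- \cup \{\tau\}} = H_C$. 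The product rule (item (2)), together with $\partial_i(x_i - x_k) = 1$ (items (11), (13)) and $\partial_i(h) = 0$ (item (4)), then gives $\partial_i((x_i - x_k) h) = h$, the second term vanishing since $\partial_i(h)=0$. Thus $s(h) = (x_i - x_k) h$ is a section of $\partial_i|_{H_C}$ onto $H_{C_-}^{*s_i}$, and its image is exactly the second summand $(x_i - x_k) H_{C_-}^{*s_i}$.

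With a section in hand the formal splitting is automatic: writing $f = (f - s(\partial_i f)) + s(\partial_i f)$ for $f \in H_C$ exhibits $H_C = \ker(\partial_i|_{H_C}) \oplus (x_i - x_k) H_{C_-}^{*s_i}$, the sum being direct because $\partial_i \circ s = \mathrm{id}$. It remains to identify the kernel. By item (4), $\ker(\partial_i|_{H_C}) = H_C^{*s_i}$, so I must show $H_C^{*s_i} = H_{C_+}^{*s_i}$. The inclusion $\supseteq$ is clear. For $\subseteq$, take $f \in H_C$ with $f * s_i = f$: since $\tau \in C$ the element $f$ satisfies $\tau$, and then $f = f * s_i$ satisfies $s_i \tau s_i$ by the transformation rule for conditions under the star action, so $f \in H_{C \cup \{s_i \tau s_i\}} = H_{C_+}$. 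Finally, $\partial_i$ is $\CC[t_1,\ldots,t_n]$-linear (item (10)) and commutes with the dot action (item (5)), and multiplication by $(x_i - x_k)$ is $\CC[t_1,\ldots,t_n]$-linear and dot-equivariant because the dot action fixes every $x_j$; hence the decomposition is one of $\CC[t_1, \ldots, t_n]$-modules with the dot action, as claimed.

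The main obstacle is the kernel identification $H_C^{*s_i} = H_{C_+}^{*s_i}$ — equivalently, the statement that a $*s_i$-invariant element of $H_C$ automatically satisfies the condition $s_i \tau s_i$ that it is not a priori required to satisfy. This is precisely where almost-stability enters: the uniqueness of $\tau$ is what yields $C \cap s_i C s_i = C_-$ (used for $\partial_i(H_C) \subseteq H_{C_-}^{*s_i}$) and $C \cup \{s_i \tau s_i\} = C_+$, and it is the one place in the argument that is not a routine manipulation of the product rule. Everything else — the computation $\partial_i((x_i - x_k)h) = h$, the containment $\partial_i(H_C) \subseteq H_{C_-}^{*s_i}$, and the equivariance checks — is bookkeeping built on \autoref{lem:compute} and the corollary to \autoref{lem:stability}.
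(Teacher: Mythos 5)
Your proof is correct, and it diverges from the paper's in one genuine way. On the $C_{-}$ side the two arguments coincide: both use $\partial_i \colon H_C \to H_{C_{-}}^{*s_i}$ (the paper gets this from $H_C \subseteq H_{C_{-}}$ and \autoref{lem:stability} applied to $C_{-}$, which is equivalent to your observation $C \cap s_i C s_i = C_{-}$, plus item (7) of \autoref{lem:compute}), with the same section $h \mapsto (x_i - x_k)h$ and the same computation $\partial_i\bigl((x_i - x_k)h\bigr) = h$. The difference is on the $C_{+}$ side. The paper never identifies the first summand as a kernel; instead it writes down a second explicit retraction $\partial_i \circ (x_k - x_{(i+1)}) \colon H_C \to H_{C_{+}}^{*s_i}$, justified by the containment $(x_k - x_{(i+1)})\,H_C \subseteq H_{C_{+}}$ (multiplying by that factor forces condition $s_i \tau s_i$), and verifies that the two compositions are complementary idempotents via the identity $\partial_i\bigl((x_k - x_{(i+1)})f\bigr) + (x_i - x_k)\,\partial_i(f) = f$. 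You instead take the complement to be $\ker(\partial_i|_{H_C}) = H_C^{*s_i}$ and prove the set-level equality $H_C^{*s_i} = H_{C_{+}}^{*s_i}$ by transporting conditions along the star action: $f$ satisfies $\tau$, hence $f = f * s_i$ satisfies $s_i \tau s_i$. That one-line invariance argument is sound, replaces the paper's idempotent bookkeeping, and makes explicit a fact the paper leaves implicit, namely that $*s_i$-invariant elements of $H_C$ automatically satisfy the missing condition; it arguably isolates more cleanly where almost-stability (uniqueness of $\tau$) enters. What the paper's route buys in exchange is explicit projection formulas onto \emph{both} summands, parallel in form to the maps of \autoref{thm:stable}, which is convenient for computation. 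A second, minor divergence: you handle $\tau = (i{+}1 \leftrightarrow k)$ directly by switching the multiplier to $(x_k - x_{(i+1)})$ (correctly normalized so that $\partial_i$ of it equals $+1$), whereas the paper reduces that case to the first via $H_C * s_i = H_{s_i C s_i}$; both are valid.
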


\begin{proof}
  We will assume that $\tau = (i \leftrightarrow k)$, since the case where $\tau = (i+1 \leftrightarrow k)$ follows by using the transformation
  \begin{equation}
    H_C * s_i = H_{s_i C s_i}.
  \end{equation}
  We claim that the decomposition is given by the following maps:
  \begin{equation}\begin{tikzpicture}[baseline=(a.base)]
    \node (a) at (0,0) {$H_{C_{+}}^{*s_i}$};
    \node (b) at (4,0) {$H_C$};
    \node (c) at (8,0) {$H_{C_{-}}^{*s_i}$};
    \path[->]
    ($(a.east)+(0,3pt)$) edge node[above] {$\id$} ($(b.west)+(0,3pt)$)
    ($(b.west)-(0,3pt)$) edge node[below] {$\partial_i \circ (x_k - x_{(i+1)})$} ($(a.east)-(0,3pt)$)
    ($(b.east)+(0,3pt)$) edge node[above] {$\partial_i$} ($(c.west)+(0,3pt)$)
    ($(c.west)-(0,3pt)$) edge node[below] {$(x_i - x_k)$} ($(b.east)-(0,3pt)$);
  \end{tikzpicture}\end{equation}
  To prove this, we need to check that:
  \begin{itemize}
    \item
      All four maps land in the stated codomain.
      Using the inclusions
      \begin{equation}\begin{gathered}
        H_{C_{+}} \subseteq H_C \subseteq H_{C_{-}} \\
        (x_i - x_k) H_{C_{-}} \subseteq H_C \\
        (x_k - x_{(i+1)}) H_C \subseteq H_{C_{+}},
      \end{gathered}\end{equation}
      we indeed have that
      \begin{equation}\begin{gathered}
        \id(H_{C_{+}}^{*s_i}) \subseteq H_{C_{+}} \subseteq H_C \\
        \partial_i(H_C) \subseteq \partial_i(H_{C_{-}}) \subseteq H_{C_{-}}^{*s_i} \\
        \partial_i\paren[\big]{(x_k - x_{(i+1)}) H_C} \subseteq \partial_i(H_{C_{+}}) \subseteq H_{C_{+}}^{*s_i} \\
        (x_i - x_k) H_{C_{-}}^{*s_i} \subseteq (x_i - x_k) H_{C_{-}} \subseteq H_C.
      \end{gathered}\end{equation}

    \item
      The two compositions $H_C \to H_C$ are complementary idempotents.
      Using the computational properties from \autoref{lem:compute}, we have
      \begin{equation}\begin{gathered}
        \partial_i\paren[\big]{(x_k - x_{(i+1)}) f} + (x_i - x_k) \partial_i(f) = f \\
        \partial_i\paren[\Big]{(x_k - x_{(i+1)}) \partial_i\paren[\big]{(x_k - x_{(i+1)}) f}} = \partial_i\paren[\big]{(x_k - x_{(i+1)}) f} \\
        (x_i - x_k) \partial_i\paren[\big]{(x_i - x_k) \partial_i(f)} = (x_i - x_k) \partial_i(f)
      \end{gathered}\end{equation}
      for every $f \in H_C$, as required.

    \item
      The compositions $H_{C_{-}}^{*s_i} \to H_{C_{-}}^{*s_i}$ and $H_{C_{+}}^{*s_i} \to H_{C_{+}}^{*s_i}$ are the identity.
      Again using \autoref{lem:compute}, we have
      \begin{equation}\begin{gathered}
        \partial_i\paren[\big]{(x_i - x_k) f} = f + (x_{(i+1)} - x_k) \partial_i(f) = f
        \quad\text{for $f \in H_{C_{-}}^{*s_i}$}, \\
        \partial_i\paren[\big]{(x_k - x_{(i+1)}) f} = f + (x_i - x_k) \partial_i(f) = f
        \quad\text{for $f \in H_{C_{+}}^{*s_i}$},
      \end{gathered}\end{equation}
      as required.
      \qedhere
  \end{itemize}
\end{proof}

\appendix\section{Some flow-up bases for \texorpdfstring{$n = 3$}{n = 3}}\label{app:flow-up}

As a further illustration of our notation and conventions, here are some examples of flow-up bases of $H_T^*(X(h)) = H_{C(h)}$ for every Hessenberg function $h : \set{1, 2, 3} \to \set{1, 2, 3}$.
We use the abbreviation $t_{ik} = (t_i - t_k)$, and indicate when $\partial_1$ or $\partial_2$ takes one basis element to another.

\vfill
\noindent
For $h = [1, 2, 3]$, we have $C(h) = \varnothing$, and $H_{C(h)} = H$ has the basis:
\begin{equation}\begin{tikzpicture}[baseline=0]
  \begin{scope}[shift={(0,2.2)}]
    \begin{scope}[shift={(-4,0)}]
      \pic{base3};
      \fill (s1s2) circle [radius=2pt];
      \node[below=2pt,gray] at (id)   {$0$};
      \node[left=2pt,gray]  at (s1)   {$0$};
      \node[right=2pt,gray] at (s2)   {$0$};
      \node[left=2pt]       at (s1s2) {$1$};
      \node[right=2pt,gray] at (s2s1) {$0$};
      \node[above=2pt,gray] at (w0)   {$0$};
    \end{scope}
    \begin{scope}[shift={(0,0)}]
      \pic{base3};
      \fill (w0) circle [radius=2pt];
      \node[below=2pt,gray] at (id)   {$0$};
      \node[left=2pt,gray]  at (s1)   {$0$};
      \node[right=2pt,gray] at (s2)   {$0$};
      \node[left=2pt,gray]  at (s1s2) {$0$};
      \node[right=2pt,gray] at (s2s1) {$0$};
      \node[above=2pt]      at (w0)   {$1$};
    \end{scope}
    \begin{scope}[shift={(4,0)}]
      \pic{base3};
      \fill (s2s1) circle [radius=2pt];
      \node[below=2pt,gray] at (id)   {$0$};
      \node[left=2pt,gray]  at (s1)   {$0$};
      \node[right=2pt,gray] at (s2)   {$0$};
      \node[left=2pt,gray]  at (s1s2) {$0$};
      \node[right=2pt]      at (s2s1) {$1$};
      \node[above=2pt,gray] at (w0)   {$0$};
    \end{scope}
  \end{scope}
  \begin{scope}[shift={(0,-2.2)}]
    \begin{scope}[shift={(-4,0)}]
      \pic{base3};
      \fill (s1) circle [radius=2pt];
      \node[below=2pt,gray] at (id)   {$0$};
      \node[left=2pt]       at (s1)   {$1$};
      \node[right=2pt,gray] at (s2)   {$0$};
      \node[left=2pt,gray]  at (s1s2) {$0$};
      \node[right=2pt,gray] at (s2s1) {$0$};
      \node[above=2pt,gray] at (w0)   {$0$};
    \end{scope}
    \begin{scope}[shift={(0,0)}]
      \pic{base3};
      \fill (id) circle [radius=2pt];
      \node[below=2pt]      at (id)   {$1$};
      \node[left=2pt,gray]  at (s1)   {$0$};
      \node[right=2pt,gray] at (s2)   {$0$};
      \node[left=2pt,gray]  at (s1s2) {$0$};
      \node[right=2pt,gray] at (s2s1) {$0$};
      \node[above=2pt,gray] at (w0)   {$0$};
    \end{scope}
    \begin{scope}[shift={(4,0)}]
      \pic{base3};
      \fill (s2) circle [radius=2pt];
      \node[below=2pt,gray] at (id)   {$0$};
      \node[left=2pt,gray]  at (s1)   {$0$};
      \node[right=2pt]      at (s2)   {$1$};
      \node[left=2pt,gray]  at (s1s2) {$0$};
      \node[right=2pt,gray] at (s2s1) {$0$};
      \node[above=2pt,gray] at (w0)   {$0$};
    \end{scope}
  \end{scope}
\end{tikzpicture}\end{equation}

\vfill
\noindent
For $h = [2, 2, 3]$, we have $C(h) = \set{s_1}$, and $H_{C(h)}$ has the basis:
\begin{equation}\begin{tikzpicture}[baseline=0]
  \begin{scope}[shift={(0,2.2)}]
    \begin{scope}[shift={(-4,0)}]
      \pic{base3}; \pic{cond12};
      \fill (s1s2) circle [radius=2pt];
      \node[below=2pt,gray] at (id)   {$0$};
      \node[left=2pt,gray]  at (s1)   {$0$};
      \node[right=2pt,gray] at (s2)   {$0$};
      \node[left=2pt]       at (s1s2) {$1$};
      \node[right=2pt,gray] at (s2s1) {$0$};
      \node[above=2pt]      at (w0)   {$1$};
    \end{scope}
    \begin{scope}[shift={(0,0)}]
      \pic{base3}; \pic{cond12};
      \fill (w0) circle [radius=2pt];
      \node[below=2pt,gray] at (id)   {$0$};
      \node[left=2pt,gray]  at (s1)   {$0$};
      \node[right=2pt,gray] at (s2)   {$0$};
      \node[left=2pt,gray]  at (s1s2) {$0$};
      \node[right=2pt,gray] at (s2s1) {$0$};
      \node[above=2pt]      at (w0)   {$t_{32}$};
    \end{scope}
    \begin{scope}[shift={(4,0)}]
      \pic{base3}; \pic{cond12};
      \fill (s2s1) circle [radius=2pt];
      \node[below=2pt,gray] at (id)   {$0$};
      \node[left=2pt,gray]  at (s1)   {$0$};
      \node[right=2pt,gray] at (s2)   {$0$};
      \node[left=2pt,gray]  at (s1s2) {$0$};
      \node[right=2pt]      at (s2s1) {$t_{31}$};
      \node[above=2pt,gray] at (w0)   {$0$};
    \end{scope}
  \end{scope}
  \begin{scope}[shift={(0,-2.2)}]
    \begin{scope}[shift={(-4,0)}]
      \pic{base3}; \pic{cond12};
      \fill (s1) circle [radius=2pt];
      \node[below=2pt,gray] at (id)   {$0$};
      \node[left=2pt]       at (s1)   {$t_{21}$};
      \node[right=2pt,gray] at (s2)   {$0$};
      \node[left=2pt,gray]  at (s1s2) {$0$};
      \node[right=2pt,gray] at (s2s1) {$0$};
      \node[above=2pt,gray] at (w0)   {$0$};
    \end{scope}
    \begin{scope}[shift={(0,0)}]
      \pic{base3}; \pic{cond12};
      \fill (id) circle [radius=2pt];
      \node[below=2pt]      at (id)   {$1$};
      \node[left=2pt]       at (s1)   {$1$};
      \node[right=2pt,gray] at (s2)   {$0$};
      \node[left=2pt,gray]  at (s1s2) {$0$};
      \node[right=2pt,gray] at (s2s1) {$0$};
      \node[above=2pt,gray] at (w0)   {$0$};
    \end{scope}
    \begin{scope}[shift={(4,0)}]
      \pic{base3}; \pic{cond12};
      \fill (s2) circle [radius=2pt];
      \node[below=2pt,gray] at (id)   {$0$};
      \node[left=2pt,gray]  at (s1)   {$0$};
      \node[right=2pt]      at (s2)   {$1$};
      \node[left=2pt,gray]  at (s1s2) {$0$};
      \node[right=2pt]      at (s2s1) {$1$};
      \node[above=2pt,gray] at (w0)   {$0$};
    \end{scope}
  \end{scope}
  \path
  (-1.6, 2.2) edge[partial1]      (-2.4, 2.2)
  ( 4  , 0.4) edge[partial1]      ( 4  ,-0.4)
  (-2.4,-2.2) edge[partial1,swap] (-1.6,-2.2)
  ;
\end{tikzpicture}\end{equation}

\vfill
\noindent
For $h = [1, 3, 3]$, we have $C(h) = \set{s_2}$, and $H_{C(h)}$ has the basis:
\begin{equation}\hspace{-1ex}\begin{tikzpicture}[baseline=0]
  \begin{scope}[shift={(0,2.2)}]
    \begin{scope}[shift={(-4,0)}]
      \pic{base3}; \pic{cond23};
      \fill (s1s2) circle [radius=2pt];
      \node[below=2pt,gray] at (id)   {$0$};
      \node[left=2pt,gray]  at (s1)   {$0$};
      \node[right=2pt,gray] at (s2)   {$0$};
      \node[left=2pt]       at (s1s2) {$t_{31}$};
      \node[right=2pt,gray] at (s2s1) {$0$};
      \node[above=2pt,gray] at (w0)   {$0$};
    \end{scope}
    \begin{scope}[shift={(0,0)}]
      \pic{base3}; \pic{cond23};
      \fill (w0) circle [radius=2pt];
      \node[below=2pt,gray] at (id)   {$0$};
      \node[left=2pt,gray]  at (s1)   {$0$};
      \node[right=2pt,gray] at (s2)   {$0$};
      \node[left=2pt,gray]  at (s1s2) {$0$};
      \node[right=2pt,gray] at (s2s1) {$0$};
      \node[above=2pt]      at (w0)   {$t_{21}$};
    \end{scope}
    \begin{scope}[shift={(4,0)}]
      \pic{base3}; \pic{cond23};
      \fill (s2s1) circle [radius=2pt];
      \node[below=2pt,gray] at (id)   {$0$};
      \node[left=2pt,gray]  at (s1)   {$0$};
      \node[right=2pt,gray] at (s2)   {$0$};
      \node[left=2pt,gray]  at (s1s2) {$0$};
      \node[right=2pt]      at (s2s1) {$1$};
      \node[above=2pt]      at (w0)   {$1$};
    \end{scope}
  \end{scope}
  \begin{scope}[shift={(0,-2.2)}]
    \begin{scope}[shift={(-4,0)}]
      \pic{base3}; \pic{cond23};
      \fill (s1) circle [radius=2pt];
      \node[below=2pt,gray] at (id)   {$0$};
      \node[left=2pt]       at (s1)   {$1$};
      \node[right=2pt,gray] at (s2)   {$0$};
      \node[left=2pt]       at (s1s2) {$1$};
      \node[right=2pt,gray] at (s2s1) {$0$};
      \node[above=2pt,gray] at (w0)   {$0$};
    \end{scope}
    \begin{scope}[shift={(0,0)}]
      \pic{base3}; \pic{cond23};
      \fill (id) circle [radius=2pt];
      \node[below=2pt]      at (id)   {$1$};
      \node[left=2pt,gray]  at (s1)   {$0$};
      \node[right=2pt]      at (s2)   {$1$};
      \node[left=2pt,gray]  at (s1s2) {$0$};
      \node[right=2pt,gray] at (s2s1) {$0$};
      \node[above=2pt,gray] at (w0)   {$0$};
    \end{scope}
    \begin{scope}[shift={(4,0)}]
      \pic{base3}; \pic{cond23};
      \fill (s2) circle [radius=2pt];
      \node[below=2pt,gray] at (id)   {$0$};
      \node[left=2pt,gray]  at (s1)   {$0$};
      \node[right=2pt]      at (s2)   {$t_{32}$};
      \node[left=2pt,gray]  at (s1s2) {$0$};
      \node[right=2pt,gray] at (s2s1) {$0$};
      \node[above=2pt,gray] at (w0)   {$0$};
    \end{scope}
  \end{scope}
  \path
  ( 1.6, 2.2) edge[partial2,swap] ( 2.4, 2.2)
  (-4  , 0.4) edge[partial2,swap] (-4  ,-0.4)
  ( 2.4,-2.2) edge[partial2]      ( 1.6,-2.2)
  ;
\end{tikzpicture}\hspace{-1ex}\end{equation}

\vfill
\noindent
For $h = [2, 3, 3]$, we have $C(h) = \set{s_1, s_2}$, and $H_{C(h)}$ has the basis:
\begin{equation}\hspace{-1ex}\begin{tikzpicture}[baseline=0]
  \begin{scope}[shift={(0,2.2)}]
    \begin{scope}[shift={(-4,0)}]
      \pic{base3}; \pic{cond12}; \pic{cond23};
      \fill (s1s2) circle [radius=2pt];
      \node[below=2pt,gray] at (id)   {$0$};
      \node[left=2pt,gray]  at (s1)   {$0$};
      \node[right=2pt,gray] at (s2)   {$0$};
      \node[left=2pt]       at (s1s2) {$t_{31}$};
      \node[right=2pt,gray] at (s2s1) {$0$};
      \node[above=2pt]      at (w0)   {$t_{21}$};
    \end{scope}
    \begin{scope}[shift={(0,0)}]
      \pic{base3}; \pic{cond12}; \pic{cond23};
      \fill (w0) circle [radius=2pt];
      \node[below=2pt,gray] at (id)   {$0$};
      \node[left=2pt,gray]  at (s1)   {$0$};
      \node[right=2pt,gray] at (s2)   {$0$};
      \node[left=2pt,gray]  at (s1s2) {$0$};
      \node[right=2pt,gray] at (s2s1) {$0$};
      \node[above=2pt]      at (w0)   {$t_{21} t_{32}$};
    \end{scope}
    \begin{scope}[shift={(4,0)}]
      \pic{base3}; \pic{cond12}; \pic{cond23};
      \fill (s2s1) circle [radius=2pt];
      \node[below=2pt,gray] at (id)   {$0$};
      \node[left=2pt,gray]  at (s1)   {$0$};
      \node[right=2pt,gray] at (s2)   {$0$};
      \node[left=2pt,gray]  at (s1s2) {$0$};
      \node[right=2pt]      at (s2s1) {$t_{31}$};
      \node[above=2pt]      at (w0)   {$t_{32}$};
    \end{scope}
  \end{scope}
  \begin{scope}[shift={(0,-2.2)}]
    \begin{scope}[shift={(-4,0)}]
      \pic{base3}; \pic{cond12}; \pic{cond23};
      \fill (s1) circle [radius=2pt];
      \node[below=2pt,gray] at (id)   {$0$};
      \node[left=2pt]       at (s1)   {$t_{21}$};
      \node[right=2pt,gray] at (s2)   {$0$};
      \node[left=2pt]       at (s1s2) {$t_{23}$};
      \node[right=2pt,gray] at (s2s1) {$0$};
      \node[above=2pt,gray] at (w0)   {$0$};
    \end{scope}
    \begin{scope}[shift={(0,0)}]
      \pic{base3}; \pic{cond12}; \pic{cond23};
      \fill (id) circle [radius=2pt];
      \node[below=2pt]      at (id)   {$1$};
      \node[left=2pt]       at (s1)   {$1$};
      \node[right=2pt]      at (s2)   {$1$};
      \node[left=2pt]       at (s1s2) {$1$};
      \node[right=2pt]      at (s2s1) {$1$};
      \node[above=2pt]      at (w0)   {$1$};
    \end{scope}
    \begin{scope}[shift={(4,0)}]
      \pic{base3}; \pic{cond12}; \pic{cond23};
      \fill (s2) circle [radius=2pt];
      \node[below=2pt,gray] at (id)   {$0$};
      \node[left=2pt,gray]  at (s1)   {$0$};
      \node[right=2pt]      at (s2)   {$t_{32}$};
      \node[left=2pt,gray]  at (s1s2) {$0$};
      \node[right=2pt]      at (s2s1) {$t_{12}$};
      \node[above=2pt,gray] at (w0)   {$0$};
    \end{scope}
  \end{scope}
  \path
  (-1.6, 2.2) edge[partial1]      (-2.4, 2.2)
  ( 1.6, 2.2) edge[partial2,swap] ( 2.4, 2.2)
  ;
\end{tikzpicture}\hspace{-1ex}\end{equation}

\vfill
\noindent
For $h = [3, 3, 3]$, we have $C(h) = \set{s_1, s_2, w_0}$, and $H_{C(h)}$ has the basis:
\begin{equation}\hspace{-4ex}\begin{tikzpicture}[baseline=0]
  \begin{scope}[shift={(0,2.2)}]
    \begin{scope}[shift={(-4,0)}]
      \pic{base3}; \pic{cond12}; \pic{cond23}; \pic{cond13};
      \fill (s1s2) circle [radius=2pt];
      \node[below=2pt,gray] at (id)   {$0$};
      \node[left=2pt,gray]  at (s1)   {$0$};
      \node[right=2pt,gray] at (s2)   {$0$};
      \node[left=2pt]       at (s1s2) {$t_{21} t_{31}$};
      \node[right=2pt,gray] at (s2s1) {$0$};
      \node[above=2pt]      at (w0)   {$t_{21} t_{31}$};
    \end{scope}
    \begin{scope}[shift={(0,0)}]
      \pic{base3}; \pic{cond12}; \pic{cond23}; \pic{cond13};
      \fill (w0) circle [radius=2pt];
      \node[below=2pt,gray] at (id)   {$0$};
      \node[left=2pt,gray]  at (s1)   {$0$};
      \node[right=2pt,gray] at (s2)   {$0$};
      \node[left=2pt,gray]  at (s1s2) {$0$};
      \node[right=2pt,gray] at (s2s1) {$0$};
      \node[above=2pt]      at (w0)   {$t_{21} t_{31} t_{32}$};
    \end{scope}
    \begin{scope}[shift={(4,0)}]
      \pic{base3}; \pic{cond12}; \pic{cond23}; \pic{cond13};
      \fill (s2s1) circle [radius=2pt];
      \node[below=2pt,gray] at (id)   {$0$};
      \node[left=2pt,gray]  at (s1)   {$0$};
      \node[right=2pt,gray] at (s2)   {$0$};
      \node[left=2pt,gray]  at (s1s2) {$0$};
      \node[right=2pt]      at (s2s1) {$t_{31} t_{32}$};
      \node[above=2pt]      at (w0)   {$t_{31} t_{32}$};
    \end{scope}
  \end{scope}
  \begin{scope}[shift={(0,-2.2)}]
    \begin{scope}[shift={(-4,0)}]
      \pic{base3}; \pic{cond12}; \pic{cond23}; \pic{cond13};
      \fill (s1) circle [radius=2pt];
      \node[below=2pt,gray] at (id)   {$0$};
      \node[left=2pt]       at (s1)   {$t_{21}$};
      \node[right=2pt,gray] at (s2)   {$0$};
      \node[left=2pt]       at (s1s2) {$t_{21}$};
      \node[right=2pt]      at (s2s1) {$t_{31}$};
      \node[above=2pt]      at (w0)   {$t_{31}$};
    \end{scope}
    \begin{scope}[shift={(0,0)}]
      \pic{base3}; \pic{cond12}; \pic{cond23}; \pic{cond13};
      \fill (id) circle [radius=2pt];
      \node[below=2pt]      at (id)   {$1$};
      \node[left=2pt]       at (s1)   {$1$};
      \node[right=2pt]      at (s2)   {$1$};
      \node[left=2pt]       at (s1s2) {$1$};
      \node[right=2pt]      at (s2s1) {$1$};
      \node[above=2pt]      at (w0)   {$1$};
    \end{scope}
    \begin{scope}[shift={(4,0)}]
      \pic{base3}; \pic{cond12}; \pic{cond23}; \pic{cond13};
      \fill (s2) circle [radius=2pt];
      \node[below=2pt,gray] at (id)   {$0$};
      \node[left=2pt,gray]  at (s1)   {$0$};
      \node[right=2pt]      at (s2)   {$t_{32}$};
      \node[left=2pt]       at (s1s2) {$t_{31}$};
      \node[right=2pt]      at (s2s1) {$t_{32}$};
      \node[above=2pt]      at (w0)   {$t_{31}$};
    \end{scope}
  \end{scope}
  \path
  (-1.6, 2.2) edge[partial1]      (-2.4, 2.2)
  ( 4  , 0.4) edge[partial1]      ( 4  ,-0.4)
  (-2.4,-2.2) edge[partial1,swap] (-1.6,-2.2)
  ( 1.6, 2.2) edge[partial2,swap] ( 2.4, 2.2)
  (-4  , 0.4) edge[partial2,swap] (-4  ,-0.4)
  ( 2.4,-2.2) edge[partial2]      ( 1.6,-2.2)
  ;
\end{tikzpicture}\hspace{-4ex}\end{equation}


\setlength{\hfuzz}{5pt}  
\bigskip{\footnotesize\par\noindent\textsc{%
LACIM,
Université du Québec à Montréal,
201 Avenue du Président-Kennedy,
Montréal QC\hspace{.7em}H2X~3Y7,
Canada}
\par\noindent\textit{Email address}:
\mailto{mathieu.guaypaquet@lacim.ca}}

\end{document}